\newtheorem{thm}{Theorem}[section]
\newtheorem{lem}{Lemma}[section]
\newtheorem{cor}{Corollary}[section]
\newtheorem{prop}{Proposition}[section]
\renewcommand{\(}{\left(}
\renewcommand{\)}{\right)}
\newcommand{\R}{{\mathbb R}}
\newcommand{\N}{{\mathbb N}}
\newcommand{\be}[1]{\begin{equation}\label{#1}}
\newcommand{\ee}{\end{equation}}
\renewcommand{\(}{\left(}
\renewcommand{\)}{\right)}
\newcommand{\ird}[1]{\int_{\R^d}{#1}\,dx}
\newcommand{\nrm}[2]{\left\|{#1}\right\|_{#2}}
\newcommand{\irf}[1]{\int_{\|x-y|<1}{#1}\,dy}
\newcommand{\irg}[1]{\int_{\|x-y|\ge 1}{#1}\,dy}
\newcommand{\scalar}[2]{\left\langle{#1},{#2}\right\rangle}
\newcommand{\irtwo}[1]{\int_{\R^2}{#1}\,dx}
\newcommand{\irtwoy}[1]{\int_{\R^2}{#1}\,dy}
\renewcommand{\(}{\left(}
\renewcommand{\)}{\right)}
\begin{document}

\title{Asymptotic behavior of Nernst-Planck equation}
\author{XINGYU LI}
\email{li@ceremade.dauphine.fr}
\maketitle

\begin{abstract}
This paper is devoted to the Nernst-Planck system of equations with an external potential of confinement. The main result is concerned with the asymptotic behaviour of the solution of the Cauchy problem. We will prove that the optimal exponential rate of convergence of the solution to the unique stationary solution is determined by the spectral gap of the linearized problem around the minimizer of the free energy. The key issue is to consider an adapted notion of scalar product.
\end{abstract}
\textbf{Keywords:} Nernst-Planck equation; large time asymptotic; free energy; Fisher information; spectral gap. \\[2pt]
\textbf{AMS subject classifications:} 35B40; 35P15; 35Q70.
\maketitle
\thispagestyle{empty}

\section{Introduction}\label{NeSec:Intro}

At the end of nineteenth century, Nernst and Planck introduced a system of equations for representing the evolution of charged particles subject to electrostatic forces. The original model is exposed in~\cite{nernst1889elektromotorische,planck1890ueber}: electrically charged particles diffuse under the action of a drift caused by an electrostatic potential. Nowadays we use this system in various frameworks like, for instance, phenomenological models for electrolytic behaviour in membranes. The original model is the non-confined Nernst-Planck system. If we take into account a mean-field Poisson coupling, in dimension $d=2$, the system takes the form
\begin{equation}
\label{Eq:(5.1)}
\left\{
\begin{array}{rcl}
\frac{\partial u}{\partial t}= \Delta u + \nabla \cdot (u\,\nabla v)\\
v=G_{2}*u \\
u(0,x)=n_{0}\ge 0
\end{array} \right.\quad x\in \mathbb{R}^2\,,\;t>0\,,
\end{equation}
where $G_2(x)=-\frac1{2\,\pi}\,\log|x|$ denotes the Green function of the Laplacian in $\mathbb{R}^2$. We shall call this model the \emph{Poisson-Nernst-Planck system}, which was also considered by Debye and H\"uckel in~\cite{debye1923theorie} and is sometimes called the \emph{Debye-H\"uckel system} in the literature. 
Up to a sign change in the mean-field term, the model is similar to the Keller-Segel model, which is going to be a source of inspiration (see~\cite{MR2226917,blanchet2010asymptotic,MR3196188} for more details) for the study of the large time behaviour and this is a reason why we consider the two-dimensional case of the model.%

Now let us introduce the notion of \emph{confinement}. In the whole space, particles repel themselves and a well-known \emph{runaway} phenomenon occurs: solutions locally vanish while the mass escapes at infinity. This can be prevented using a container (a bounded domain, with convenient boundary conditions) with walls, or a confinement potential. Actually, it is possible to obtain the bounded domain case as a limit of a whole space case with an external potential of \emph{confinement} taking larger and larger values outside of the domain. Here we shall consider the Poisson-Nernst-Planck system with confinement in $\mathbb{R}^d$, where the dimension is $d=2$ or $d=2$. The density function $n$ solves
\begin{equation}
\label{Eq:(5.2)}
\left\{
\begin{array}{rcl}
\frac{\partial n}{\partial t}= \Delta n + \nabla \cdot (n\,\nabla c) + \nabla \cdot(n\,\nabla\phi)\\
c=G_{d}*n \\
n(0,x)=n_{0}\ge 0, \quad \ird{n(0,x)}=M>0
\end{array} \right.\quad x\in \mathbb{R}^d\,,\;t>0\,.
\end{equation}
The convolution kernel $G_d$ is the Green function of the Laplacian in $\mathbb{R}^d$, namely
\[
G_2(x)=-\frac1{2\,\pi}\,\log|x|\quad\mbox{for any}\quad x\in\mathbb R^2\quad\mbox{and}\quad G_3(x)=\frac 1{4\,\pi\,|x|}\quad\mbox{for any}\quad x\in\mathbb R^3\,.
\]
In other words, we ask that $c$ solves the Poisson equation
\[
-\Delta c=n\quad x\in\mathbb{R}^d\,,
\]
while $\phi$ is a given external potential. In the special case of $d=2$ and $\phi(x)=\frac\mu2\,|x|^2$ for some $\mu>0$, if we use the change of variables
\[
\label{ch1}
u(t,x)=R^{-d}\,n(\tau,\xi)\,,\quad v(t,x)=c(\tau,\xi)\,,
\]
\begin{equation}
\label{ch2}
\xi=\frac xR\,,\quad\tau=\log R\,,\quad R=R(t):=\sqrt{1+2\,\mu\,t}\,,
\end{equation}
then we observe that $(n,c)$ solves~\eqref{Eq:(5.2)} if and only if $(u,v)$ solves~\eqref{Eq:(5.1)}. Studying the convergence rates of the solutions of~\eqref{Eq:(5.2)} amounts to study the intermediate asymptotics of the solutions of~\eqref{Eq:(5.1)} when runaway occurs. Obviously, the mass of a solution of~\eqref{Eq:(5.2)} is conserved, and we shall write that $\ird{n(t,x)}=M$ for any $t\ge0$. The mass of a solution of~\eqref{Eq:(5.1)} is also conserved, but one can prove that, for a solution of~\eqref{Eq:(5.1)}, the mass contained in any given compact set in $\R^2$ decays to zero.

{}From here on, we shall assume that $M>0$ is fixed. Now let us turn our attention to the conditions on the confinement potential. From now on, we shall assume that $\phi\in\mathrm W_{\rm{loc}}^{1,\infty}(\R^d)$ is such that $\nabla\phi\in\mathrm W^{1,\infty}(\R^d)$ and
\be{Hyp-C1}\tag{C1}
\liminf_{|x|\to+\infty}\frac{\phi(x)}{\log|x|}>d\,,
\ee
and also that the bounded measure $e^{-\phi}\,dx$ admits a spectral gap (or Poincar\'e) inequality, \emph{i.e.}, that there exists a positive constant $\Lambda_\phi$ such that
\begin{multline}\label{Hyp-C2}\tag{C2}
\ird{|\nabla u|^2\,e^{-\phi}}\ge\Lambda_\phi\ird{|u|^2\,e^{-\phi}}\\
\forall\,u\in\mathrm H^1(\R^d;e^{-\phi}\,dx)\quad\mbox{such that}\quad\ird{u\,e^{-\phi}}=0\,.
\end{multline}
Based on Persson's lemma, a sufficient condition is obtained by requesting that
\be{Hyp-C3}\tag{C3}
\sigma_\phi:=\lim_{r\to+\infty}\mathop{\mathrm{infess}}_{x\in B_r^c}\(\frac14\,|\nabla \phi|^2-\frac12\,\Delta_x\phi\)>0\quad\mbox{and}\quad\lim_{r\to+\infty}\mathop{\mathrm{infess}}_{x\in B_r^c}|\nabla \phi|>0\,.
\ee
Let us refer to~\cite{addala} for details and further references. We learn from~\cite{arnold2001convex,MR1777308} that the stationary solutions $(n_{\infty}, c_{\infty})$ of~\eqref{Eq:(5.2)} are obtained as solutions of the \emph{Poisson-Boltzmann equation}
\begin{equation}
\label{st}
-\Delta c_\infty=n_\infty=M\,\frac{e^{-c_\infty-\phi}}{\ird{e^{-c_\infty-\phi}}}\,.
\end{equation}
Under Assumption~\eqref{Hyp-C1} and the additional condition
\be{Hyp-C4}\tag{C4}
\liminf_{|x|\to+\infty}\frac{\phi(x)}{\log|x|}>4+\frac M{2\,\pi}\quad\mbox{if}\quad d=2\,,
\ee
we know (see~\cite[Lemma~5]{addala} and earlier references therein) that the unique solution of~\eqref{st} is obtained as a minimizer of the \emph{free energy} $\mathcal F$ defined by
\begin{equation}
\label{freen}
\mathcal F[n]:=\ird{n\,\log n}+\ird{n\,\phi} + \frac{1}{2}\ird{n\,(-\Delta)^{-1}n}\,.
\end{equation}
Further details are given in Section~\ref{NeSec:mi}. A simple consequence of the minimization procedure is that
\[
\mathcal{F}[n]-\mathcal{F}[n_{\infty}]\ge0\quad\forall\,n\in\mathrm L^1_+(\R^d)
\]
with the convention that $\mathcal{F}[n]$ can take the value $+\infty$ if, for instance $n\,\log n$ is not integrable. For sake of brevity, we shall say that \emph{$\phi$ is a confinement potential satisfying Assumption \hypertarget{C}$(\mathrm C)$} if~\eqref{Hyp-C1},~\eqref{Hyp-C3} and~\eqref{Hyp-C4} hold.

Our goal is to study the asymptotic behaviour of a solution of~\eqref{Eq:(5.2)} with initial datum $n_0$ at $t=0$ such that $\mathcal{F}[n_0]$ is finite. It is a standard observation that the free energy $\mathcal F[n(t,\cdot)]$ of a solution of~\eqref{Eq:(5.2)} is monotone non-increasing along the flows and obeys to
\begin{equation}
\label{lypa}
\frac d{dt}\mathcal F[n(t,\cdot)]=-\mathcal I[n(t,\cdot)]
\end{equation}
where the \emph{Fisher information} $\mathcal I$ is defined by
\[
\mathcal I[n]:=\ird{n\,\big|\nabla{(\log n+c+\phi)}\big|^2}\,.
\]
Our main result is that, as $t\to+\infty$, $\mathcal F[n(t,\cdot)]$ is bounded by $\mathcal I[n(t,\cdot)]$ up to a multiplicative constant which shows that $n(t,\cdot)$ converges to $n_\infty$ at an exponential rate. The precise result is not written in terms of the free energy but in terms of a weighted $\mathrm L^2$ norm and goes as follows.
\begin{thm}\label{Thm4.1} Let $d=2$ or $3$ and consider a potential $\phi$ satisfying \hyperlink{C}{$(\mathrm C)$}. Assume that $n$ solves~\eqref{Eq:(5.2)} with initial datum $n(0,\cdot)= n_{0}\in \mathrm L^2_{+}(n_{\infty}^{-1}dx)$, $\ird{n_{0}}=M$, and $\mathcal F[n_0]<\infty$. Then there exist two positive constants $C$ and $\Lambda$ such that
\[
\ird{\big|n(t,.)-n_{\infty}\big|^2\,n_{\infty}^{-1}}\le C\,e^{-\,\Lambda\,t}\quad\forall\,t\ge0\,.
\]
\end{thm}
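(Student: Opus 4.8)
The plan is to argue by a relative entropy / linearization scheme, combining the standard free-energy–Fisher-information machinery with the Poincaré inequality \eqref{Hyp-C2} and the minimization property of $n_\infty$. First I would set up the perturbation: writing $n = n_\infty(1+h)$ with $\ird{n_\infty\,h}=0$, the weighted $\mathrm L^2$ quantity in the statement is exactly $\ird{h^2\,n_\infty}$, so the conclusion reduces to an exponential decay estimate for $\|h\|_{\mathrm L^2(n_\infty\,dx)}^2$. The key point stressed in the abstract is that the natural Hilbert structure here is \emph{not} the plain $\mathrm L^2(n_\infty^{-1})$ inner product but one that is adapted to the nonlocal Poisson coupling: one should use the bilinear form
\[
\langle h_1,h_2\rangle_\star := \ird{h_1\,h_2\,n_\infty} + \ird{(n_\infty h_1)\,(-\Delta)^{-1}(n_\infty h_2)}\,,
\]
which is the Hessian of $\mathcal F$ at $n_\infty$ (positive definite by the minimization property, since the nonlocal term is nonnegative, at least for $d=3$ and, for $d=2$, under \eqref{Hyp-C4}). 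In this inner product the linearized evolution operator is self-adjoint, and its spectral gap $\Lambda$ is precisely the constant appearing in the theorem.

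The steps I would carry out, in order: \textbf{(i)} Recall from the earlier sections (the minimization of $\mathcal F$, \cite{addala}) that $n_\infty>0$ is smooth, bounded above and below on compact sets, and satisfies the pointwise relation $\log n_\infty + c_\infty + \phi = \text{const}$, so that the Fisher information can be rewritten as $\mathcal I[n] = \ird{n\,|\nabla(\log(n/n_\infty) + (-\Delta)^{-1}(n-n_\infty))|^2}$. \textbf{(ii)} Linearize: show that near $n_\infty$ one has, to leading order, $\mathcal F[n]-\mathcal F[n_\infty] \sim \tfrac12\langle h,h\rangle_\star$ and $\mathcal I[n]\sim \ird{n_\infty\,|\nabla(h + (-\Delta)^{-1}(n_\infty h))|^2} =: \mathcal Q[h]$. \textbf{(iii)} Prove the \emph{spectral gap inequality} $\mathcal Q[h]\ge \Lambda\,\langle h,h\rangle_\star$ for all admissible $h$ with $\ird{n_\infty h}=0$; this is where Assumption \eqref{Hyp-C2} enters, via a perturbation argument showing that adding the (compact, lower-order) nonlocal term to the Poincaré form associated to $e^{-\phi}\,dx$ still leaves a gap. \textbf{(iv)} Use \eqref{lypa}, i.e. $\frac{d}{dt}(\mathcal F[n(t)]-\mathcal F[n_\infty]) = -\mathcal I[n(t)]$, together with (iii) to obtain a Grönwall inequality $\frac{d}{dt}(\mathcal F[n(t)]-\mathcal F[n_\infty]) \le -2\Lambda\,(\mathcal F[n(t)]-\mathcal F[n_\infty])$ once $n(t)$ is close enough to $n_\infty$, hence exponential decay of $\mathcal F[n(t)]-\mathcal F[n_\infty]$. \textbf{(v)} Convert this back: a Csiszár–Kullback–Pinsker-type inequality controls $\ird{|n-n_\infty|^2 n_\infty^{-1}}$ by $\mathcal F[n]-\mathcal F[n_\infty]$ (using the uniform-in-time $\mathrm L^1$ bound and entropy bound to localize), which gives the stated estimate, possibly after absorbing the behaviour on a finite initial time interval into the constant $C$.

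The main obstacle, and the technically delicate part, is step \textbf{(iii)} together with making the linearization in \textbf{(ii)} rigorous \emph{uniformly in time}: the functional inequality $\mathcal Q[h]\ge\Lambda\langle h,h\rangle_\star$ must be genuinely global (all admissible $h$), not just infinitesimal, and one must control the nonlinear remainder terms in both $\mathcal F$ and $\mathcal I$ by the quadratic part. For this I would need good a priori bounds on the trajectory — uniform $\mathrm L^\infty$ or at least uniform integrability of $n(t,\cdot)$ and of $c(t,\cdot)$ — so that the cross terms involving $(-\Delta)^{-1}(n-n_\infty)$ (which in $d=2$ requires care because of the logarithmic kernel and the mass/confinement constraint \eqref{Hyp-C4}) are controlled. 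A secondary difficulty is that the closeness-to-equilibrium needed in \textbf{(iv)} is only available for large $t$; establishing that $\mathcal F[n(t)]-\mathcal F[n_\infty]\to 0$ (a convergence-without-rate step, typically via compactness and the LaSalle invariance principle using that $\mathcal I$ vanishes only at $n_\infty$) is a prerequisite that must be handled before the quantitative Grönwall argument can be started.
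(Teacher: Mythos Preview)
Your setup in steps (i)--(iii) matches the paper closely: the adapted inner product $\langle\cdot,\cdot\rangle_\star$ is exactly the paper's $Q_1$, your $\mathcal Q$ is the paper's $Q_2$, and the spectral gap $\mathcal Q[h]\ge\Lambda\langle h,h\rangle_\star$ is the coercivity estimate $Q_2\ge\mathcal C_\star Q_1$ of Section~\ref{Sec:Coercivity-NP}. The convergence-without-rate prerequisite you flag is also handled in the paper exactly as you suggest, via compactness.

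Where your plan diverges is in steps (iv)--(v), and step (v) has a genuine gap. You propose to run Gr\"onwall on the \emph{nonlinear} free energy $\mathcal F[n]-\mathcal F[n_\infty]$ and then convert to $\ird{|n-n_\infty|^2\, n_\infty^{-1}}$ by ``a Csisz\'ar--Kullback--Pinsker-type inequality''. But CKP goes the other way: it bounds $\|n-n_\infty\|_{\mathrm L^1}^2$ by the relative entropy, not the weighted $\mathrm L^2$ norm. Writing $n=n_\infty(1+h)$, the relative-entropy integrand $(1+h)\log(1+h)-h$ grows only like $h\log h$ for large $h>0$, so $\ird{h^2\, n_\infty}$ cannot be controlled by $\mathcal F[n]-\mathcal F[n_\infty]$ unless one has a uniform pointwise bound $n\le C\,n_\infty$. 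That bound is neither assumed nor established anywhere (the paper obtains uniform $\mathrm L^p$ bounds on $n$, not on $n/n_\infty$, and $n_\infty$ decays like $e^{-\phi}$), and your ``localize'' hedge does not supply it. The same missing bound would be needed to make the linearization of $\mathcal I$ in step (iv) quantitative.

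The paper sidesteps this entirely by never running Gr\"onwall on $\mathcal F$. Instead it differentiates the \emph{quadratic} functional $Q_1[f(t,\cdot)]=\langle f,f\rangle_\star$ directly along the full nonlinear flow and obtains the exact identity
\[
\frac{d}{dt}Q_1[f] = -\,2\,Q_2[f] - 2\,\lambda(t)\,,\qquad \lambda(t)=\ird{\nabla(f+g\,c_\infty)\cdot f\,n_\infty\,\nabla(g\,c_\infty)}\,,
\]
so the whole nonlinearity is isolated in the single remainder $\lambda(t)$. Cauchy--Schwarz gives $|\lambda(t)|\le\sqrt{Q_1\,Q_2}\,\|\nabla(g\,c_\infty)\|_{\mathrm L^\infty}$, and $\|\nabla(g\,c_\infty)\|_{\mathrm L^\infty}=\|\nabla c-\nabla c_\infty\|_{\mathrm L^\infty}\to 0$ by the convergence-without-rate step together with the elliptic estimate~\eqref{EstimRho} (in fact $\|\nabla(g\,c_\infty)\|_{\mathrm L^\infty}=O\big(Q_1^{1/(d+1)}\big)$, which is what yields the \emph{sharp} rate). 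This closes a Gr\"onwall inequality directly on $Q_1$, and since $Q_1[f]\ge\ird{f^2\, n_\infty}=\ird{|n-n_\infty|^2\, n_\infty^{-1}}$, no conversion step of type (v) is needed at all. The fix to your outline is therefore simple: replace (iv)--(v) by the single computation of $\frac{d}{dt}Q_1$.
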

In section 4, we will characterize $\Lambda$ as the spectral gap of the linearized operator associated with~\eqref{Eq:(5.2)} and observe, as a special case, that $\Lambda=2\,\mu$ if $d=2$ and $\phi=\frac{\mu}{2}\,|x|^2$, for some $\mu>0$.

Beyond free energy and entropy methods, the study of the large time asymptotics of the Poisson-Nernst-Planck system involves various tools of nonlinear analysis. Proving an exponential rate of convergence is interesting for studies of Poisson-Nernst-Planck systems by methods of scientific computing. Specific methods are needed for the numerical computation of the solutions, see~\cite{MR1450844,park1997qualitative}. In~\cite{liu2014free}, Liu and Wang implement at the level of the free energy a finite difference method to compute the numerical solution in a bounded domain. Concerning rates of convergence from a more theoretical point of view, let us mention that the existence of special solutions and self-similar solutions is considered in~\cite{biler1992existence,biler1995cauchy,herczak2008existence}. We refer to~\cite{MR2997301} for a discussion of the evolution problem from the point of view of physics.

Variants of the Poisson-Nernst-Planck system with nonlinear diffusions have been considered, for which the sharp rate of convergence is still unknown. Some papers rely on the use of distances related to the $\mathrm L^2$-Wasserstein distance, see~\cite{di2008large,dkinderlehrer2015wasserstein,MR3485423}. Exponential decay rates should be natural in view of the expected gradient flow structure of the system in this framework. The simpler case of linear diffusions on a bounded domain of $\mathbb{R}^d$ with $d\ge 3$ was studied in~\cite{MR1777308}: the convergence to the stationary solution occurs at an exponential rate. As already mentioned, another related model is the Keller-Segel system in dimension $2$. Regularity and asymptotic estimates for this system were discussed in~\cite{MR2226917,MR3196188} and are a source of inspiration for the present study, in particular concerning the scalar product and the coercivity estimates. 
For completeness, let us mention that similar ideas have been recently developed in~\cite{li2019flocking} for the study of a McKean-Vlasov model model of flocking, which also involves a non-local coupling.

This paper is organized as follows. In Section~\ref{NeSec:mi} we prove that the minimizer of the free energy~$\mathcal F$ is the stationary solution $(n_{\infty}, c_{\infty})$ 
and it attracts any solution of $\eqref{Eq:(5.2)}$ as $t\to+\infty$. In Section~\ref{Sec:Coercivity-NP}, we show that the relative entropy and the relative Fisher information provide us with two quadratic forms which are related by the linearized evolution operator and prove the spectral gap property of this operator. And in Section~\ref{Sec:LargeTime}, we give the proof of Theorem~\ref{Thm4.1} and give some additional results.

\section{Miminizers of the free energy and convergence to the stationary solution}\label{NeSec:mi}
The main goal of this section is to prove that the minimizer of the free energy~$\mathcal F$ is the stationary solution $(n_{\infty}, c_{\infty})$ considered in the introduction and that it attracts any solution of $\eqref{Eq:(5.2)}$ as $t\to+\infty$.

\subsection{Minimizers of the free energy and stationary solutions}
\label{Sec:mini}

\begin{lem} Let $d=2$ or $d=3$ and assume that the potential $\phi$ satisfies~\hyperlink{C}{$(\mathrm C)$}. On the set
\[
\mathcal X:=\Big\{f\in\mathrm L_+^1(\mathbb{R}^d)\,:\,\ird{f(x)}=M\,,\, f\log f\in \mathrm L^1(\mathbb{R}^d), f\,\phi\in \mathrm L^1(\mathbb{R}^d)\Big\}\,,
\]
the free energy $\mathcal F$ is semi-bounded from below.
\end{lem}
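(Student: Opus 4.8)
The plan is to reduce the lower bound to the classical Gibbs inequality for a suitable relative entropy, after absorbing the self-interaction term into the confinement potential. First I would record the \emph{entropy estimate}: writing $d\gamma:=e^{-\phi}\,dx$ and $Z:=\ird{e^{-\phi}}$, which is finite by~\eqref{Hyp-C1} (under which $\phi$ is also bounded from below), the function $h:=f\,e^{\phi}$ associated with $f\in\mathcal X$ satisfies $\idg h=M$, and since $f\log f$ and $f\,\phi$ lie in $\mathrm L^1(\R^d)$ one has $h\log h\in\mathrm L^1(d\gamma)$ with $\idg{h\log h}=\ird{f\log f}+\ird{f\,\phi}$. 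Jensen's inequality for the convex function $s\mapsto s\log s$ and the probability measure $Z^{-1}\,d\gamma$ then yields
\[
\ird{f\log f}+\ird{f\,\phi}=\idg{h\log h}\ge M\,\log\frac MZ\,.
\]

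For $d=3$ this already finishes the argument, since the Green function $G_3(x)=1/(4\,\pi\,|x|)$ is nonnegative, so that $\tfrac12\,\ird{f\,(-\Delta)^{-1}f}\ge0$ and hence $\mathcal F[f]\ge M\,\log(M/Z)$ for every $f\in\mathcal X$.

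For $d=2$ the self-interaction term equals $-\tfrac1{4\,\pi}\iint f(x)\,f(y)\,\log|x-y|\,dx\,dy$ and is no longer sign-definite, so the idea is to bound only its negative part. Using $\log|x-y|\le\log_+|x-y|\le\log\big((1+|x|)\,(1+|y|)\big)$ I would obtain
\[
-\frac1{4\,\pi}\iint f(x)\,f(y)\,\log|x-y|\,dx\,dy\ge-\frac M{2\,\pi}\,\ix{f\,\log(1+|x|)}\,,
\]
whose right-hand side is finite because~\eqref{Hyp-C1} gives $\log(1+|x|)\le C\,(1+\phi)$ while $f,\,f\,\phi\in\mathrm L^1(\R^2)$; and if the positive part $\iint_{|x-y|<1}f(x)\,f(y)\,\big|\log|x-y|\big|\,dx\,dy$ diverges then $\mathcal F[f]=+\infty$ and there is nothing to prove. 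This leaves
\[
\mathcal F[f]\ge\ix{f\log f}+\ix{f\,\widetilde\phi}\,,\qquad\widetilde\phi:=\phi-\tfrac M{2\,\pi}\,\log(1+|x|)\,,
\]
and here Assumption~\eqref{Hyp-C4} is precisely what is needed: it forces $\liminf_{|x|\to+\infty}\widetilde\phi(x)/\log|x|\ge4>2$, so that $\widetilde Z:=\ix{e^{-\widetilde\phi}}<\infty$, while $f\log f$ and $f\,\widetilde\phi$ stay integrable. Re-running the entropy estimate with $\widetilde\phi$ in place of $\phi$ then gives $\mathcal F[f]\ge M\,\log(M/\widetilde Z)$, the desired lower bound.

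I expect the only genuine obstacle to be this two-dimensional self-interaction term: one must check that the crude comparison $\log|x-y|\le\log(1+|x|)+\log(1+|y|)$ sheds exactly the logarithmic growth that~\eqref{Hyp-C4} is built to beat, so that the modified potential $\widetilde\phi$ is still confining in the sense of~\eqref{Hyp-C1}, and one must keep track of the integrability of every quantity involved (the convention $\mathcal F[f]=+\infty$ taking care of concentrated profiles). Once the correct reference measure $e^{-\widetilde\phi}\,dx$ has been identified, the entropy part is a one-line application of Jensen's inequality.
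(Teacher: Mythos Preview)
Your argument is correct. The entropy estimate via Jensen's inequality and the $d=3$ case coincide with the paper's proof. For $d=2$ the routes diverge: the paper simply invokes an external reference (a reverse logarithmic Hardy--Littlewood--Sobolev type inequality) to conclude that $\mathcal F$ is bounded from below under~\eqref{Hyp-C4}, whereas you supply a self-contained elementary argument, bounding $\log|x-y|$ by $\log(1+|x|)+\log(1+|y|)$ so as to absorb the logarithmic tail of the self-interaction into a modified potential $\widetilde\phi$, and then re-running Jensen. Your approach has the advantage of making transparent exactly how the threshold $4+M/(2\pi)$ in~\eqref{Hyp-C4} arises: the $M/(2\pi)$ compensates the self-interaction and the remaining growth $>4$ keeps $e^{-\widetilde\phi}$ integrable in $\R^2$ (in fact $>2$ would suffice for this last step). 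One small clarification: the claim that $\phi$ is bounded from below uses not only~\eqref{Hyp-C1} but also the standing assumption $\phi\in\mathrm W^{1,\infty}_{\rm loc}(\R^d)$, which guarantees local boundedness.
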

\begin{proof} According to Assumptionn~\eqref{Hyp-C1}, we know that $e^{-\phi}\in \mathrm L^1(\mathbb{R}^d)$. Set $\rho(x):=\lambda\,e^{-\phi}$, such that $\ird{\rho(x)}=M$. Since the function $x\log x$ is convex, we obtain that $\ird{f\log f}\ge \ird{f\log\rho}$ by Jensen's inequality. So
\begin{multline*}
\mathcal F[f]\ge\ird{f\log\rho}+\ird{f\phi}+\frac{1}{2}\ird{f\,(-\Delta)^{-1}f}\\
= M\log\lambda+\frac{1}{2}\ird{f\,(-\Delta)^{-1}f}\,.
\end{multline*}
If $d=3$, $\ird{f\,(-\Delta)^{-1}f}\ge 0$ because the Green function $G_3(x)$ is nonnegative. If $d=2$, the result has been established in~\cite[Corollary~1.2]{dolbeault2019hls} as a consequence of Assumption~\eqref{Hyp-C4}.
\end{proof}

\begin{lem}\label{Lem:FuniqueMin} Let $d=2$ or $d=3$ and assume that the potential $\phi$ satisfies~\hyperlink{C}{$(\mathrm C)$}.
There exists a unique minimizer $n_{\infty}$ of $\mathcal F$ in $\mathcal X$.
\end{lem}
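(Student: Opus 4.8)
The plan is to prove existence via the direct method of the calculus of variations and uniqueness via strict convexity of $\mathcal F$ along suitable interpolations. For existence, I would take a minimizing sequence $(f_k)\subset\mathcal X$, so that $\mathcal F[f_k]\to\inf_{\mathcal X}\mathcal F=:m$, which is finite by the previous lemma. The first task is to extract a weakly convergent subsequence with a nontrivial, mass-preserving limit. From $\sup_k\mathcal F[f_k]<\infty$ together with the lower bound $\mathcal F[f]\ge M\log\lambda+\frac12\ird{f\,(-\Delta)^{-1}f}$, one gets a uniform bound on $\ird{f_k\,\phi}$ and on the entropy $\ird{f_k\log f_k}$ (in $d=3$ directly; in $d=2$ using \cite[Corollary~1.2]{dolbeault2019hls} to control the negative part of the interaction term against the entropy and potential, as in the proof of the previous lemma). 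Assumption~\eqref{Hyp-C1} gives $\ird{f_k\,\phi}\ge c\ird{f_k\log(1+|x|)}-C$, which provides tightness of $(f_k)$ and rules out mass escaping to infinity; the entropy bound, via the Dunford--Pettis criterion, gives equi-integrability. Hence up to a subsequence $f_k\rightharpoonup n_\infty$ in $\mathrm L^1(\R^d)$ with $\ird{n_\infty}=M$ and $n_\infty\in\mathcal X$.

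Next I would pass to the limit in each term of $\mathcal F$. The entropy $f\mapsto\ird{f\log f}$ is convex and lower semicontinuous for the weak $\mathrm L^1$ topology (it is a nonnegative convex functional after adding the potential part to absorb the region where $f\log f<0$), so $\liminf_k\ird{f_k\log f_k}\ge\ird{n_\infty\log n_\infty}$; the potential term $\ird{f\,\phi}$ passes to the liminf by Fatou since $\phi$ is bounded below and $f_k\,\phi\to n_\infty\,\phi$ in a suitable sense along the tight sequence. For the interaction term I would write $\frac12\ird{f\,(-\Delta)^{-1}f}=\frac12\|\nabla(-\Delta)^{-1}f\|_2^2$ in $d=3$, which is weakly lower semicontinuous; in $d=2$ one uses the logarithmic HLS framework of \cite{dolbeault2019hls} to get the same semicontinuity modulo the entropy. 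Combining, $\mathcal F[n_\infty]\le\liminf_k\mathcal F[f_k]=m$, so $n_\infty$ is a minimizer.

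For uniqueness, suppose $n_0,n_1\in\mathcal X$ both minimize. The entropy and potential terms are convex along the linear interpolation $n_s=(1-s)n_0+sn_1$, with the entropy \emph{strictly} convex unless $n_0=n_1$ a.e., while the interaction term $\frac12\ird{n_s\,(-\Delta)^{-1}n_s}$ is convex in $s$ (it is a nonnegative quadratic form: the bilinear form $\langle f,(-\Delta)^{-1}g\rangle$ is positive semidefinite, directly in $d=3$ and by \cite{dolbeault2019hls} in $d=2$ on the relevant class). Hence $s\mapsto\mathcal F[n_s]$ is strictly convex unless $n_0=n_1$, which together with $\mathcal F[n_0]=\mathcal F[n_1]=m$ forces $n_0=n_1$.

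The main obstacle is the two-dimensional interaction term: unlike $d=3$, the kernel $G_2=-\frac1{2\pi}\log|x|$ is sign-changing and not positive, so neither the lower bound, the weak lower semicontinuity, nor the convexity along interpolations is automatic. All three must be routed through the logarithmic Hardy--Littlewood--Sobolev inequality under Assumption~\eqref{Hyp-C4}, controlling the dangerous part of $\frac12\ird{f\,(-\Delta)^{-1}f}$ against $\ird{f\log f}$ and $\ird{f\,\phi}$; keeping these estimates uniform along the minimizing sequence (and along interpolations) is the delicate point. Everything else is the standard direct-method machinery.
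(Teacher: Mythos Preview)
Your existence argument matches the paper's: both invoke the direct method, with the paper being terse (``standard minimization methods'') where you spell out tightness via~\eqref{Hyp-C1}, Dunford--Pettis, and lower semicontinuity.

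For uniqueness the paper takes a different and slightly sharper route. Rather than arguing by strict convexity along linear interpolations, it first writes the Euler--Lagrange equation $\log n_\infty+\phi+c_\infty=\lambda$ for a fixed minimizer and then uses this to compute the gap explicitly:
\[
\mathcal F[n]-\mathcal F[n_\infty]=\ird{n\,\log\!\left(\frac{n}{n_\infty}\right)}+\frac12\ird{|\nabla(c-c_\infty)|^2}\,,
\]
both terms manifestly nonnegative (the first by Jensen, the second after an integration by parts, with the $d=2$ case handled via~\cite{MR2226917} using that $\int(n-n_\infty)=0$). Your convexity argument is equivalent in spirit: the second variation of the interaction term along $n_s$ is exactly $\langle n_1-n_0,(-\Delta)^{-1}(n_1-n_0)\rangle$, which is nonnegative \emph{because} $n_1-n_0$ has zero mean, not because the bilinear form is positive semidefinite on all of $\mathcal X$ when $d=2$. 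You gesture at this with ``on the relevant class'', but the zero-mean condition is the actual mechanism and should be made explicit. The paper's formulation has the added benefit of producing a quantitative gap (relative entropy plus Dirichlet energy of $c-c_\infty$) that is reused verbatim in the later large-time analysis.
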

\begin{proof} Standard minimization methods show that a minimizing sequence admits, up to the extraction of a subsequence, a limit which is a minimizer.
From the proof above, $\mathcal F$ is lower bounded and satisfies the coercivity inequality. For a fixed minimizer $n_{\infty}$, it should satisfy the Euler-Lagrange equation
\[
\log n_\infty+\phi+c_\infty=\lambda\,,\quad c_\infty=(-\Delta)^{-1}n_{\infty}\,,
\]
for some Lagrange multiplier $\lambda$ associated with the mass constraint, which means that $(n_\infty,c_\infty)$ solve~\eqref{st}.
By direct computation, with $c=(-\Delta)^{-1}n$, we observe that
\[
\mathcal{F}[n]-\mathcal{F}[n_{\infty}]=\ird{n\,\log\left(\frac{n}{n_\infty}\right)}+ \frac{1}{2}\ird{(n-n_{\infty})\,(c-c_{\infty})}\,.
\]
Since $\ird n=\ird{n_{\infty}}=M$, we obtain from Jensen's inequality that
\[
\ird{n\log\left(\frac{n}{n_{\infty}}\right)}\ge 0
\]
and, according to~\cite{MR2226917},
\[
\ird{(n-n_{\infty})(c-c_{\infty})}=\ird{|\nabla (c-c_{\infty})|^2}\ge 0\,.
\]
Hence $\mathcal F[n]-\mathcal F[n_{\infty}]\ge 0$ for any $n\in\mathcal X$, with equality if and only if $n=n_{\infty}$. This means that the minimizer of $\mathcal F$ is unique.
\end{proof}
We may notice that $n_\infty$ is radially symmetric if $\phi$ is radially symmetric, as a consequence of the uniqueness result of Lemma~\ref{Lem:FuniqueMin}.

We learn from the proof of~\cite[Lemma~23]{MR2226917} that
\[
\max_{|x|\to\infty}\left|c_{\infty}+\frac{M}{2\,\pi}\log|x|\right|<\infty\quad \mbox{if}\quad d=2,\quad \max_{|x|\to\infty}\left|c_{\infty}-\frac{M}{4\,\pi\,|x|}\right|<\infty\quad \mbox{if}\quad d=3\,,
\]
and deduce from~\eqref{st} that, as $|x|\to\infty$,
\begin{equation}
\label{infcon}
n_{\infty}\sim |x|^{\frac{M}{2\,\pi}}e^{-\phi}\quad \mbox{if}\quad d=2\,,\quad n_{\infty}\sim e^{-\frac{M}{4\,\pi\,|x|}-\phi}\quad \mbox{if}\quad d=3\,.
\ee
\begin{prop}\label{Prop:cinfty} Let $d=2$ or $d=3$ and assume that the potential $\phi$ satisfies~\hyperlink{C}{$(\mathrm C)$}.
Then the solutions $(n_{\infty}, c_{\infty})$ of~\eqref{st} are such that $c_{\infty}$ is bounded if $d=3$ and $\nrm{\nabla c_{\infty}}{\mathrm L^q(\R^2)}$ is bounded for any $q\in(2,+\infty]$ if $d=2$.
\end{prop}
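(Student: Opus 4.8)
The plan is to use the asymptotic behaviour of $c_\infty$ recorded in~\eqref{infcon} together with local elliptic regularity for the Poisson equation $-\Delta c_\infty = n_\infty$. First I would treat the case $d=3$: from the estimate borrowed from~\cite[Lemma~23]{MR2226917}, $c_\infty - \frac{M}{4\pi|x|}$ is bounded near infinity, and since $\frac{M}{4\pi|x|}\to 0$, this shows $c_\infty$ is bounded for $|x|$ large. On any bounded set, boundedness of $c_\infty$ follows from the fact that $n_\infty\in\mathrm L^1\cap\mathrm L^\infty$ (it is continuous by~\eqref{st}, being an exponential of a continuous function, and it is integrable with mass $M$), so $c_\infty = G_3 * n_\infty$ is bounded by a convolution estimate: $G_3\in\mathrm L^1_{\rm loc}$ and decays, so $\|G_3*n_\infty\|_{\mathrm L^\infty}\lesssim \|n_\infty\|_{\mathrm L^1}+\|n_\infty\|_{\mathrm L^\infty}$. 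This settles $d=3$.

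For $d=2$ the object of interest is $\nabla c_\infty$, and here I would split $\nabla c_\infty = \nabla G_2 * n_\infty$ with $\nabla G_2(x) = -\frac1{2\pi}\frac{x}{|x|^2}$, which lies in $\mathrm L^p_{\rm loc}(\R^2)$ for every $p<2$ and decays like $|x|^{-1}$ at infinity. The strategy is a standard convolution splitting: write $\nabla G_2 = \nabla G_2\,\mathbf 1_{|x|<1} + \nabla G_2\,\mathbf 1_{|x|\ge 1}$. The near part is in $\mathrm L^1(\R^2)$, so convolving with $n_\infty\in\mathrm L^\infty$ (again $n_\infty$ is bounded: by~\eqref{infcon} and Assumption~\eqref{Hyp-C1}, $n_\infty\sim |x|^{M/2\pi}e^{-\phi}$ decays, and it is continuous) gives a bounded contribution, while for the far part one uses that $\nabla G_2\,\mathbf 1_{|x|\ge1}\in\mathrm L^q(\R^2)$ for every $q>2$ together with $n_\infty\in\mathrm L^1(\R^2)$ and Young's inequality $\mathrm L^q * \mathrm L^1 \hookrightarrow \mathrm L^q$; interpolating, $\nabla c_\infty\in\mathrm L^q(\R^2)$ for all $q\in(2,\infty]$. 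Alternatively, and perhaps more cleanly, one can quote~\eqref{infcon}: since $c_\infty + \frac{M}{2\pi}\log|x|$ is bounded and in fact $\nabla c_\infty + \frac{M}{2\pi}\frac{x}{|x|^2}$ decays (from the faster-than-$|x|^{-1}$ decay of $n_\infty$ coming from~\eqref{Hyp-C4}, which forces $\phi$ to grow fast enough), $\nabla c_\infty$ decays like $|x|^{-1}$ at infinity; combined with local $\mathrm L^q$ integrability of $\nabla G_2*n_\infty$ for $q<\infty$ and local boundedness away from the singularity, one gets $\nabla c_\infty\in\mathrm L^q$ for $q\in(2,\infty]$ but not for $q=2$ (the $|x|^{-1}$ tail is exactly the borderline non-integrable case in $\R^2$), which explains the range.

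The main obstacle I anticipate is making the decay rate of $n_\infty$ at infinity fully quantitative and rigorous: one needs to know not just that $c_\infty\sim -\frac{M}{2\pi}\log|x|$ but that the remainder is controlled well enough (e.g. $c_\infty + \frac{M}{2\pi}\log|x|$ bounded, as stated after Lemma~\ref{Lem:FuniqueMin}) so that $n_\infty = M\,e^{-c_\infty-\phi}/\!\int e^{-c_\infty-\phi}$ genuinely behaves like $|x|^{M/2\pi}e^{-\phi}$, and then to invoke~\eqref{Hyp-C4} to guarantee that this product still decays (in fact is integrable, with a power margin). Once that decay is in hand, the convolution estimates are routine. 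I would therefore organize the proof as: (i) record boundedness and decay of $n_\infty$ from~\eqref{infcon} and~\eqref{Hyp-C4}; (ii) for $d=3$, conclude boundedness of $c_\infty$ by the near/far convolution splitting of $G_3$; (iii) for $d=2$, conclude the $\mathrm L^q$ bounds on $\nabla c_\infty$ for $q\in(2,\infty]$ by the analogous splitting of $\nabla G_2$, noting the failure at $q=2$ is due to the $|x|^{-1}$ decay at infinity.
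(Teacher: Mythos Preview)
Your proposal is correct in spirit and in its essential mechanism (a near/far splitting of the Green kernel combined with $n_\infty\in\mathrm L^1\cap\mathrm L^\infty$), and for $d=3$ it is virtually identical to the paper's argument, which simply observes that $c_\infty=G_3*n_\infty\ge 0$ and bounds it above by $\kappa_3\int e^{-\phi(y)}/|x-y|\,dy$.

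For $d=2$ your route differs from the paper's in one noteworthy respect. You take $n_\infty\in\mathrm L^\infty(\R^2)$ as an input, justified by ``$n_\infty$ is continuous, being an exponential of a continuous function,'' and then run a clean Young-inequality argument on the near/far split of $\nabla G_2$. The paper does not assume local boundedness of $n_\infty$ a priori; instead it uses only $\int n_\infty=M$ and the tail decay $n_\infty(x)\le\kappa|x|^{-4}$ for $|x|\ge R$ (from~\eqref{infcon} and~\eqref{Hyp-C4}) to produce an explicit \emph{lower} bound on $c_\infty$ on the ball $\{|x|\le R\}$, which via~\eqref{st} yields an upper bound on $n_\infty$ there, and only then reinjects this into the integral expression for $|\nabla c_\infty|$. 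In other words, the paper derives the $\mathrm L^\infty$ bound on $n_\infty$ inside the proof rather than assuming it. Your continuity argument for $c_\infty$ is not wrong --- it can be made rigorous by elliptic regularity or a bootstrapping argument --- but as written it is a small circularity (continuity of $c_\infty$ relies on some integrability of $n_\infty$ beyond $\mathrm L^1$, which in turn depends on a lower bound for $c_\infty$). If you want to keep your cleaner Young-inequality approach, you should either invoke elliptic regularity explicitly to justify $n_\infty\in\mathrm L^\infty_{\rm loc}$, or import the paper's one-line lower bound on $c_\infty$ before applying your splitting. Once that is done, your argument and the paper's are equivalent, and both conclude by noting $|\nabla c_\infty(x)|\sim|x|^{-1}$ as $|x|\to\infty$ to capture the range $q\in(2,\infty]$.
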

\begin{proof} From~\eqref{infcon} and~\eqref{Hyp-C4}, we know that $n_\infty$ is bounded outside of a large centered ball of radius $R>0$. Let us assume that $|x|\le R$ and recall that
\begin{eqnarray*}
&&c_{\infty}(x)=\kappa_3\int_{\R^3}\frac{e^{-c_{\infty}(y)-\phi(y)}}{|x-y|}\,dy\quad\mbox{if}\quad d=3\,,\quad\mbox{with}\quad\kappa_3=\frac M{4\,\pi\,\int_{\R^3}{e^{-c_{\infty}-\phi}}dx}\,,\\
&&|\nabla c_{\infty}(x)|\le\kappa_2\int_{\R^2}\frac{e^{-c_{\infty}(y)-\phi(y)}}{|x-y|}\,dy\quad\mbox{if}\quad d=2\,,\quad\mbox{with}\quad\kappa_2=\frac M{2\,\pi\,\int_{\R^2}{e^{-c_{\infty}-\phi}}dx}\,.\\
\end{eqnarray*}
In dimension $d=3$, it is enough to observe that $c_\infty=(-\Delta)^{-1}n_\infty\le0$ and deduce the bound
\[
0\le c_{\infty}(x)\le\kappa_3\int_{\R^3}\frac{e^{-\phi(y)}}{|x-y|}\,dy\,.
\]
In dimension $d=2$, we deduce from~\eqref{infcon} and~\hyperlink{C}{$(\mathrm C)$} that for $R>0$ large enough, there exists a constant $\kappa>0$ such that
\[
n_\infty(x)\le n_\infty(x)\,\mathbb 1_{|x|<R}+\kappa\,\mathbb 1_{|x|\ge R}\,|x|^{-4}\,,
\]
which allows us to write
\[
c_\infty(x)\ge-\frac M{2\,\pi}\,\log(2\,R)-\frac\kappa{2\,\pi}\int_{|y|\ge R}\frac{\log|x-y|}{|y|^4}\,dy
\]
for any $x\in\R^2$ such that $|x|\le R$. Reinjecting this estimate in the expression of $|\nabla c_{\infty}(x)|$ completes the proof. The bound on $\nrm{\nabla c_{\infty}}{\mathrm L^q(\R^2)}$ follows by observing that $|\nabla c_{\infty}(x)|\sim|x|^{-1}$ as $|x|\to+\infty$. \end{proof}

\subsection{Uniform bounds on the solution of~\texorpdfstring{\eqref{Eq:(5.2)}}{(5.2)}}

We establish bounds on the solution $n(t,\cdot)$ of~\eqref{Eq:(5.2)} which are independent of $t$.
\begin{lem}\label{lem2.1} Let $d=2$ or $d=3$ and assume that the potential $\phi$ satisfies~\hyperlink{C}{$(\mathrm C)$}. For any solution $n$ of~\eqref{Eq:(5.2)}, there exists a constant $\mathcal C>0$ and a time $T>0$ such that
\begin{equation*}
\|n(t,\cdot)\|_{\mathrm L^p}\le\mathcal C\quad\forall\, t\ge T\,,\quad\forall\, p\in(1,+\infty]\,.
\end{equation*}
\end{lem}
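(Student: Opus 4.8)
The plan is to combine the monotonicity of the free energy with parabolic smoothing to bootstrap from an $\mathrm L^1$ bound to an $\mathrm L^\infty$ bound, uniformly in time for $t$ large. First I would use that $\mathcal F[n(t,\cdot)]\le\mathcal F[n_0]<\infty$ for all $t\ge0$ (by~\eqref{lypa}) together with the semi-boundedness from the first Lemma of this section to control $\ird{n\log n}$ and $\ird{n\,\phi}$; in dimension $d=2$ the logarithmic Hardy--Littlewood--Sobolev inequality invoked via~\cite{dolbeault2019hls} under~\eqref{Hyp-C4} is what keeps the nonlocal term $\frac12\ird{n\,(-\Delta)^{-1}n}$ from destroying this, and in $d=3$ that term is simply nonnegative. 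This yields a uniform-in-$t$ bound on the entropy and on a super-linear weight, hence (since $\phi$ grows at least logarithmically by~\eqref{Hyp-C1}, so $\ird{n\log(1+|x|)}$ is controlled) a uniform bound on $\|n(t,\cdot)\|_{\mathrm L^1}$ that is automatic from mass conservation, plus equi-integrability and tightness.

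Next I would run a Moser-type iteration (or equivalently a Gagliardo--Nirenberg / De Giorgi scheme) on the equation $\dt n=\Delta n+\nabla\cdot(n\,\nabla c)+\nabla\cdot(n\,\nabla\phi)$. Testing against $p\,n^{p-1}$ gives
\[
\frac1{dt}\frac{d}{dt}\ird{n^p}+\frac{4(p-1)}{p}\ird{\big|\nabla n^{p/2}\big|^2}=(p-1)\ird{n^p\,\Delta(c+\phi)}\,,
\]
and $-\Delta c=n$, while $\Delta\phi\in\mathrm L^\infty$ by hypothesis ($\nabla\phi\in\mathrm W^{1,\infty}$). So the right-hand side is bounded by $(p-1)\ird{n^{p+1}}+(p-1)\|\Delta\phi\|_\infty\ird{n^p}$; the supercritical-looking term $\ird{n^{p+1}}$ is handled exactly as in the Keller--Segel literature~\cite{MR2226917}: interpolate $\|n\|_{p+1}^{p+1}$ between $\|n\|_1$ and $\|\nabla n^{p/2}\|_2^2$ via Gagliardo--Nirenberg (in $d=2$ or $3$), absorb the gradient term into the left side using smallness of the $\mathrm L^1$-mass localized appropriately or simply the fact that the dissipation coefficient $4(p-1)/p$ beats the interpolation constant once $p$ is not too large, and close an ODE of the form $y'\le a-b\,y^{1+\sigma}$ (or $y'\le a+b\,y-c\,y^{1+\sigma}$), which forces $\limsup_{t\to\infty}y(t)<\infty$. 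This gives a uniform bound on $\|n(t,\cdot)\|_{\mathrm L^p}$ for each fixed $p\in(1,\infty)$, $t\ge T_p$; a standard $p\to\infty$ argument in the Moser iteration (letting $p_k=2^k$ and tracking constants) upgrades this to the $\mathrm L^\infty$ bound, and by interpolation with $\mathrm L^1$ one gets all $p\in(1,\infty]$ simultaneously with a single $T$ and a single $\mathcal C$.

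The main obstacle is the nonlocal drift term $\nabla\cdot(n\,\nabla c)$ and making the absorption in the $\mathrm L^p$ estimate genuinely uniform in $t$ rather than merely local in time. The point where care is needed is that the coefficient in front of $\ird{n^{p+1}}$ after Gagliardo--Nirenberg involves $\|n\|_{\mathrm L^1}=M$, which is fine and $t$-independent, but in $d=2$ the exponent bookkeeping is borderline and one must invoke the hypotheses through Proposition~\ref{Prop:cinfty}-style estimates only implicitly; here instead one argues directly on $n$, so the key is that $M$ (not $\|n_\infty\|_1$ alone) controls the constant, and that the entropy bound from step one prevents concentration, guaranteeing the initial data for the iteration at time $T$ is already in every $\mathrm L^p$. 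I would also need the drift $\nabla\phi\in\mathrm W^{1,\infty}$ hypothesis precisely to control $\ird{n^p\,\Delta\phi}$ by $\ird{n^p}$ with a constant independent of $x$, which is why that regularity was assumed. Once the ODE comparison is set up correctly, the conclusion follows; the remaining steps are routine parabolic bootstrap.
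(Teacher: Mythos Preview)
Your proposal contains a sign error that turns the easiest term into an artificial obstacle. After testing against $p\,n^{p-1}$ you correctly arrive at
\[
\frac{d}{dt}\ird{n^p}+\frac{4(p-1)}{p}\ird{\big|\nabla n^{p/2}\big|^2}=(p-1)\ird{n^p\,\Delta(c+\phi)}\,,
\]
but since $-\Delta c=n$ the right-hand side equals $-(p-1)\ird{n^{p+1}}+(p-1)\ird{n^p\,\Delta\phi}$. The term $-(p-1)\ird{n^{p+1}}$ is \emph{negative}: the Poisson coupling in Nernst--Planck is repulsive, the opposite sign from Keller--Segel. You then bound the right-hand side from above by $(p-1)\ird{n^{p+1}}+(p-1)\|\Delta\phi\|_\infty\ird{n^p}$ and proceed to ``handle'' $\ird{n^{p+1}}$ by Gagliardo--Nirenberg absorption as in the Keller--Segel literature. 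That absorption, in $d=2$, only closes under a subcritical mass condition (essentially $M<8\pi$), which is nowhere assumed here; for large $M$ your argument does not close.

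The paper's proof exploits the correct sign directly: one simply discards the nonnegative gradient term and keeps
\[
\frac{1}{p-1}\,\frac{d}{dt}\ird{n^p}\le -\ird{n^{p+1}}+C\ird{n^p}\,,
\]
then uses H\"older, $\ird{n^{p+1}}\ge M^{-1/(p-1)}\big(\ird{n^p}\big)^{p/(p-1)}$, to obtain the scalar ODE inequality $z'\le(p-1)\big(C\,z-M^{-1/(p-1)}z^{p/(p-1)}\big)$, which forces $z(t)\le(2C)^{p-1}M$ for all $t$ beyond a fixed time, with the bound $(2C)^{(p-1)/p}M^{1/p}$ uniform as $p\to\infty$. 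No Gagliardo--Nirenberg, no Moser iteration, and no preliminary entropy/tightness step via the free energy is needed for this lemma; your first paragraph is superfluous here.
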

\begin{proof}
For any integer $k$, set $n_{0,k}=\min(n_0,k)$, then $n_{0,k}\in \mathrm L^p(\mathbb{R}^d)$ for any $p\ge 1$. The solution $n(t,\cdot)$ of the equation~\eqref{Eq:(5.2)} with initial data $n_{0,k}$ is in $ \mathrm L^p(\mathbb{R}^d)$ for any $t>0$ by the Maximum Principle. Since, by assumption,~$|\nabla\phi|$ satisfies a Lipschitz condition, there exists a constant $C>0$ such that \hbox{$\Delta\phi\le C$}, and we have the estimate
\begin{multline*}
\frac{1}{p-1}\,\frac{d}{dt}\ird{n(t,x)^p}=-p\ird{|\nabla n|^2\,n^{p-2}}-\ird{n^{p+1}}+\ird{n^p\, \Delta \phi}\\
\le -\ird{n^{p+1}}+C\ird{n^p}\,.
\end{multline*}
Using H\"older's inequality $\left(\ird{n}\right)^{\frac{1}{p}} \left(\ird{n^{p+1}}\right)^{\frac{p-1}{p}}\ge \ird{n^p}$, we obtain that
\begin{equation*}
\ird{n^{p+1}}\ge M^{-\frac{1}{p-1}} \left(\ird{n^p}\right)^\frac{p}{p-1}
\end{equation*}
With $z(t,\cdot)=\ird{n(t,\cdot)^p}$, the problem reduces to the differential inequality
\begin{equation*}
\frac{1}{p-1}\,z'\le -M^{-\frac{1}{p-1}}\,z^{\frac{p}{p-1}} + C\,z
\end{equation*}
using $\ird{n_{0,k}}\le M$. It is elementary to prove that
\begin{equation*}
z(t)\le (2\,C)^{p-1}\,M\quad\forall\,t\ge4\,C
\end{equation*}
and conclude that the bound
\begin{equation*}
\|n(t,\cdot)\|_{\mathrm L^p(\R^d)} \le (2\,C)^\frac{p-1}{p}\,M^\frac{1}{p}
\end{equation*}
has a uniform upper bound in the limit as $p\to+\infty$. See~\cite{blanchet2010asymptotic} for further details on a similar estimate.
\end{proof}

%
\begin{cor} Let $d=2$ or $d=3$ and assume that the potential $\phi$ satisfies~\hyperlink{C}{$(\mathrm C)$}. For any solution $n$ of~\eqref{Eq:(5.2)} with initial datum $n_0\in\mathrm L^1_+(\R^d)$ such that $\mathcal F[n_0]<+\infty$, there exists a constant $\mathcal C>0$ and a time $T>0$ such that
\begin{equation*}
\|\nabla c(t,\cdot)\|_{\mathrm L^q(\mathbb{R}^d)}\le\mathcal C\quad\forall\, t\ge T\,,\quad\forall\, q\in(2,+\infty]\,.
\end{equation*}
\end{cor}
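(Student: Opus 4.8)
The plan is to exploit the explicit form of the gradient kernel $\nabla G_d$, together with the uniform-in-time bounds of Lemma~\ref{lem2.1} and the conservation of mass. Since $c=G_d*n$, one has $\nabla c(t,\cdot)=(\nabla G_d)*n(t,\cdot)$, and a direct computation gives $|\nabla G_2(x)|=\tfrac1{2\,\pi\,|x|}$ and $|\nabla G_3(x)|=\tfrac1{4\,\pi\,|x|^2}$, so that $|\nabla G_d(x)|=\kappa_d\,|x|^{-(d-1)}$ for some constant $\kappa_d>0$. The key idea is to split this kernel at the unit ball, $\nabla G_d=K_0+K_\infty$ with $K_0:=(\nabla G_d)\,\mathbb 1_{\{|x|<1\}}$ and $K_\infty:=(\nabla G_d)\,\mathbb 1_{\{|x|\ge1\}}$, and to estimate the two resulting contributions to $\nabla c$ by Young's convolution inequality, using the $\mathrm L^\infty\cap\mathrm L^1$ control of $n$ for the singular part and the conserved mass for the tail part.

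For the singular part, since $d-1<d$ the function $|x|^{-(d-1)}$ is integrable on the unit ball of $\R^d$, hence $K_0\in\mathrm L^1(\R^d)$; Young's inequality then yields, for every $q\in(1,+\infty]$ and every $t\ge T$ (with $T$ given by Lemma~\ref{lem2.1}),
\[
\nrm{K_0*n(t,\cdot)}{\mathrm L^q(\R^d)}\le\nrm{K_0}{\mathrm L^1(\R^d)}\,\nrm{n(t,\cdot)}{\mathrm L^q(\R^d)}\le\mathcal C\,\nrm{K_0}{\mathrm L^1(\R^d)}\,.
\]
For the tail part, $K_\infty$ is bounded and decays like $|x|^{-(d-1)}$ at infinity, so $K_\infty\in\mathrm L^s(\R^d)$ for every $s\in\big(\tfrac d{d-1},+\infty\big]$; since $d\in\{2,3\}$ one has $\tfrac d{d-1}\le2$, so $K_\infty\in\mathrm L^q(\R^d)$ for every $q\in(2,+\infty]$. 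Applying Young's inequality with the $\mathrm L^1$ factor carried by $n$ and using $\ird{n(t,x)}=M$ for all $t\ge0$ gives
\[
\nrm{K_\infty*n(t,\cdot)}{\mathrm L^q(\R^d)}\le\nrm{K_\infty}{\mathrm L^q(\R^d)}\,\nrm{n(t,\cdot)}{\mathrm L^1(\R^d)}=M\,\nrm{K_\infty}{\mathrm L^q(\R^d)}\,.
\]

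Adding the two bounds, we obtain for every $q\in(2,+\infty]$ and every $t\ge T$
\[
\nrm{\nabla c(t,\cdot)}{\mathrm L^q(\R^d)}\le\mathcal C\,\nrm{K_0}{\mathrm L^1(\R^d)}+M\,\nrm{K_\infty}{\mathrm L^q(\R^d)}=:\mathcal C'\,,
\]
which is the desired estimate; here $T$ does not depend on $q$, while $\mathcal C'$ depends on $q$ only through $\nrm{K_\infty}{\mathrm L^q(\R^d)}$, which degenerates solely as $q\downarrow\tfrac d{d-1}$. The argument is elementary and I do not expect a genuine obstacle: the only point deserving a line of care is the bookkeeping of Lebesgue exponents, namely checking that the borderline integrability exponent $d/(d-1)$ of the far-field kernel stays below $2$ in dimensions $d=2,3$ so that the full range $q\in(2,+\infty]$ is covered, together with the remark that no local-in-time control of $n$ is needed since the statement only concerns $t\ge T$.
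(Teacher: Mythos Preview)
Your argument is correct and is in fact more direct than the paper's. Both proofs start from the same pointwise bound $|\nabla G_d(x)|=\kappa_d\,|x|^{-(d-1)}$ and split at the unit ball, but the paper only pushes this to an $\mathrm L^\infty$ bound on $\nabla c$ (via H\"older with the $\mathrm L^{d+1}$ norm of $n$ on the near part). To reach the intermediate exponents $q\in(2,\infty)$, the paper then takes a detour through the stationary solution: it writes $\|\nabla c\|_{\mathrm L^q}\le\|\nabla c-\nabla c_\infty\|_{\mathrm L^q}+\|\nabla c_\infty\|_{\mathrm L^q}$, invokes Proposition~\ref{Prop:cinfty} for the second term, and for the first term uses the free-energy control $\|\nabla(c-c_\infty)\|_{\mathrm L^2}^2\le 2\big(\mathcal F[n_0]-\mathcal F[n_\infty]\big)$ together with interpolation against the $\mathrm L^\infty$ bound. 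Your approach bypasses this entirely: applying Young's inequality with $K_0\in\mathrm L^1$ against $n\in\mathrm L^q$ and with $K_\infty\in\mathrm L^q$ against $n\in\mathrm L^1$ gives the full range $q\in(2,+\infty]$ in one stroke, using neither the free energy, nor Proposition~\ref{Prop:cinfty}, nor any property of $n_\infty$. In particular your proof does not actually need the hypothesis $\mathcal F[n_0]<+\infty$; only the mass conservation and the $\mathrm L^p$ bounds of Lemma~\ref{lem2.1} are used. The paper's route has the incidental benefit of isolating the estimate $\|\nabla(c-c_\infty)\|_{\mathrm L^2}\le C$, which is reused in Section~\ref{Sec:CVNernstPlanck}, but for the Corollary as stated your argument is the cleaner one.
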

\begin{proof} The method is inspired from~\cite[Section~3]{MR3196188}. If $h=(-\Delta)^{-1}\rho$, then
\[
|\nabla h(x)|\le\frac{1}{|\mathbb S^{d-1}|}\ird{\frac{\rho (y)}{|x-y|^{d-1}}}
\]
can be estimated by splitting the integral into two parts corresponding to $ |x-y| \le 1$ and $|x-y| >1 $. By applying twice H\"older's inequality, we deduce from
\begin{align*}
&\frac{1}{|\mathbb S^{d-1}|}\irf{\frac{\rho (y)}{|x-y|^{d-1}}} \le d^\frac d{d+1}\,|\mathbb S^{d-1}|^{-\frac1{d+1}}\,\|\rho\|_{\mathrm L^{d+1}(\mathbb{R}^3)}\\
&\frac{1}{|\mathbb S^{d-1}|}\irg{\frac{\rho (y)}{|x-y|^{d-1}}} \le \frac{1}{|\mathbb S^{d-1}|}\, \|\rho\|_{\mathrm L^1(\mathbb{R}^d)}
\end{align*}
that
\be{EstimRho}
\|\nabla((-\Delta)^{-1} \rho)\| _{\mathrm L^{\infty}(\mathbb{R}^d)}\le \|\rho\| _{\mathrm L^{1}(\mathbb{R}^d)}+ d^\frac d{d+1}\,|\mathbb S^{d-1}|^{-\frac1{d+1}}\,\|\rho\| _{\mathrm L^{d+1}(\mathbb{R}^d)}
\ee
for any $\rho \in \mathrm L^{1}\cap \mathrm L^{d+1}(\mathbb{R}^d)$. Applying it with $\rho=n(t,\cdot)$ and $c=(-\Delta)^{-1}n$ and using Minkowski's inequality $\|\nabla c(t,\cdot)\|_{\mathrm L^q(\mathbb{R}^d)}\le\|\nabla c(t,\cdot)-\nabla c_\infty\|_{\mathrm L^q(\mathbb{R}^d)}+\|\nabla c_\infty\|_{\mathrm L^q(\mathbb{R}^d)}$, the result follows from the estimate $\|\nabla c(t,\cdot)-\nabla c_\infty\|_{\mathrm L^2(\mathbb{R}^d)}^2\le 2\,\mathcal F[n_0]$ together with Proposition~\ref{Prop:cinfty} and Lemma~\ref{lem2.1}.\end{proof}

\subsection{Convergence to stationary solutions}\label{Sec:CVNernstPlanck}

The next step is to establish the convergence \emph{without rate} of the solution of~\eqref{Eq:(5.2)} to the stationary solution. For later purpose, let us recall the Aubin-Lions compactness lemma. A simple statement goes as follows (see~\cite{gogny1989etats} for more details).
\begin{lem}
\label{Aub}
(\textbf{Aubin-Lions Lemma}) Take $T>0$, $p\in(1,\infty)$, and let $(f_k)_{k\in\mathbb{N}}$ be a bounded sequence
of functions in $\mathrm L^p(0,T;H)$, where $H$ is a Banach space. If $(f_k)_{k\in\mathbb{N}}$ is bounded in $\mathrm L^p(0,T;V)$, where $V$ is compactly imbedded in $H$ and if $({\partial f_k}/{\partial t})_{k\in\mathbb{N}}$ is bounded in $\mathrm L^p(0,T;V')$ uniformly with respect to $k\in\mathbb{N}$, where $V'$ is the dual space of $V$, then $(f_k)_{k\in\mathbb{N}}$ is relatively compact in $\mathrm L^p(0,T;H)$.
\end{lem}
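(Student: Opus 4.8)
The plan is to verify the vector-valued Riesz--Fr\'echet--Kolmogorov compactness criterion in $\mathrm L^p(0,T;H)$. Implicitly one assumes that $V$ is dense in $H$ and $H$ reflexive (e.g. a Hilbert space), so that $V\hookrightarrow H\hookrightarrow V'$ is a Gelfand triple and in particular $\|\cdot\|_{V'}\le C\,\|\cdot\|_H$ on $H$. The sequence $(f_k)$ is already bounded in $\mathrm L^p(0,T;H)$, and for $0<a<b<T$ the averages $\int_a^b f_k(t)\,dt$ are bounded in $V$ (by H\"older's inequality), hence relatively compact in $H$ by the compact embedding $V\hookrightarrow H$. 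So the only point left to establish is the uniform equicontinuity of time-translations in the $H$-norm, namely $\sup_k\big\|f_k(\cdot+h)-f_k(\cdot)\big\|_{\mathrm L^p(0,T-h;H)}\to0$ as $h\to0^+$. The whole difficulty is to convert the available bounds — on $(f_k)$ in $\mathrm L^p(0,T;V)$ and on $(\partial_t f_k)$ in $\mathrm L^p(0,T;V')$ — into equicontinuity in the \emph{intermediate} norm $\|\cdot\|_H$, and the bridge is an Ehrling-type interpolation inequality.

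First I would prove the Ehrling--Lions lemma: for every $\eta>0$ there is $C_\eta>0$ such that
\[
\|v\|_H\le\eta\,\|v\|_V+C_\eta\,\|v\|_{V'}\qquad\text{for all }v\in V\,.
\]
This follows by contradiction: if it failed for some $\eta_0>0$ there would exist $v_n\in V$ with $\|v_n\|_H=1$, $\|v_n\|_V$ bounded and $\|v_n\|_{V'}\to0$; by the compact embedding $V\hookrightarrow H$ a subsequence converges in $H$ to some $v$ with $\|v\|_H=1$, while $H\hookrightarrow V'$ forces $v=0$, a contradiction. Next I would estimate time-translations in $V'$: writing $f_k(t+h)-f_k(t)=\int_0^h\partial_s f_k(t+\sigma)\,d\sigma$ as an identity in $V'$ and using Minkowski's integral inequality,
\[
\big\|f_k(\cdot+h)-f_k(\cdot)\big\|_{\mathrm L^p(0,T-h;V')}\le h\,\|\partial_t f_k\|_{\mathrm L^p(0,T;V')}\le C\,h\,.
\]

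Combining the two, I would apply the Ehrling inequality to $v=f_k(t+h)-f_k(t)$ and integrate in $t\in(0,T-h)$, obtaining
\[
\big\|f_k(\cdot+h)-f_k(\cdot)\big\|_{\mathrm L^p(0,T-h;H)}\le 2\,\eta\,\sup_k\|f_k\|_{\mathrm L^p(0,T;V)}+C_\eta\,C\,h\,;
\]
choosing first $\eta$ small and then $h$ small gives the required uniform equicontinuity. To conclude in a self-contained way (rather than quoting Simon's theorem) I would then mollify in time: for a smooth mollifier $\rho_\delta$ on $\R$, the functions $f_k^\delta:=\rho_\delta*f_k$ satisfy, for each fixed $\delta$, $\sup_t\|f_k^\delta(t)\|_V\le C_\delta$ and $\|f_k^\delta(t)-f_k^\delta(t')\|_V\le C_\delta\,|t-t'|$ uniformly in $k$; hence by Arzel\`a--Ascoli together with the compactness of $V\hookrightarrow H$ the family $\{f_k^\delta\}_k$ is relatively compact in $C([a,b];H)$ for every closed subinterval $[a,b]\subset(0,T)$, a fortiori in $\mathrm L^p(a,b;H)$. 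On the other hand $\big\|f_k-f_k^\delta\big\|_{\mathrm L^p(a,b;H)}\le\sup_{|s|\le\delta}\big\|f_k(\cdot+s)-f_k(\cdot)\big\|_{\mathrm L^p(H)}\to0$ uniformly in $k$ as $\delta\to0$ by the equicontinuity just proved. So $(f_k)$ lies, uniformly in $k$, arbitrarily close to a relatively compact set, which yields total boundedness.

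The only genuinely delicate point — hence the part I expect to be the main obstacle — is the bookkeeping at the endpoints $t=0$ and $t=T$: translating and mollifying in time are clean only on subintervals, and one has to justify $\partial_t(\rho_\delta*f_k)=\rho_\delta*\partial_t f_k$. I would handle this by extending each $f_k$ across $0$ and $T$ by reflection, or simply by exhausting $(0,T)$ with intervals $(\varepsilon,T-\varepsilon)$ and running a diagonal extraction in $\varepsilon$. Since the statement is entirely classical (Aubin, J.-L.~Lions, and in this generality Simon), one may also simply invoke it as quoted.
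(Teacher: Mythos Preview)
The paper does not prove this lemma at all: it is stated as a classical result with a reference (``see~\cite{gogny1989etats} for more details'') and then used as a black box in the proof of Proposition~\ref{Thm:(2.1)}. So there is nothing to compare your argument against on the paper's side.

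Your outline is the standard route to Aubin--Lions via the Ehrling interpolation inequality and a Riesz--Fr\'echet--Kolmogorov/Simon-type compactness criterion, and it is correct in its essentials. The contradiction proof of Ehrling's lemma is fine; the $V'$-estimate of time translations via $\int_0^h\partial_t f_k(t+\sigma)\,d\sigma$ and Minkowski is fine; and the combination gives uniform $H$-equicontinuity of translates. The mollification-in-time plus Arzel\`a--Ascoli argument is also standard, and your remark about handling the endpoints by reflection or by exhausting $(0,T)$ with compact subintervals and a diagonal extraction is exactly how one closes the proof. One small quibble: in your Ehrling contradiction you should say ``$\|v_n\|_V\le 1/\eta_0$'' rather than merely ``bounded'' (since $\|v_n\|_H=1\le\eta_0\|v_n\|_V+n\|v_n\|_{V'}$ does not by itself bound $\|v_n\|_V$ without normalizing differently); the usual normalization is $\|v_n\|_V=1$, which forces $\|v_n\|_H\ge\eta_0$ and $\|v_n\|_{V'}\to0$, and then compactness in $H$ plus $H\hookrightarrow V'$ gives the contradiction. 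With that adjustment the argument goes through.
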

With this result in hand, we are in a position to prove the following result.
\begin{prop}
\label{Thm:(2.1)}
Suppose that $d=2$ or $3$. Let $n$ be the solution of~\eqref{Eq:(5.2)} and assume that the potential $\phi$ satisfies~\hyperlink{C}{$(\mathrm C)$}. Then for any $p\in [1,\infty)$ and any $q\in [2,\infty)$, we have
\[
\lim_{t\to \infty}\|n(t,\cdot)-n_\infty\| _{\mathrm L^p(\mathbb{R}^d)}=0\quad\mbox{and}\quad
\lim_{t\to \infty}\|\nabla c(t,\cdot)-\nabla c_\infty\| _{\mathrm L^q(\mathbb{R}^d)} =0\,.
\]
\end{prop}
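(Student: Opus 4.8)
The plan is to combine the Lyapunov structure of the free energy with the uniform bounds established above and the Aubin-Lions compactness lemma, following the classical LaSalle-invariance-principle strategy. First I would integrate the dissipation identity~\eqref{lypa} in time to get $\int_0^\infty\mathcal I[n(t,\cdot)]\,dt=\mathcal F[n_0]-\mathcal F[n_\infty]<\infty$, so that $\mathcal I[n(t,\cdot)]\to0$ along a sequence $t_k\to\infty$. The uniform $\mathrm L^p$ bounds of Lemma~\ref{lem2.1} and the $\nabla c$ bounds of the Corollary hold for $t\ge T$, and together with Assumption~\eqref{Hyp-C1} on $\phi$ they give enough compactness: the sequence $n_k(\cdot):=n(t_k+\cdot,\cdot)$ is bounded in $\mathrm L^p(0,1;V)$ for a suitable weighted Sobolev space $V$ compactly embedded in $H=\mathrm L^1_{\mathrm{loc}}$-type space, while $\partial_t n_k$ is bounded in $\mathrm L^p(0,1;V')$ directly from the equation~\eqref{Eq:(5.2)} using the bounds on $n$, $\nabla c$, $\nabla\phi$. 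Applying Lemma~\ref{Aub} yields, up to a subsequence, convergence of $n_k$ in $\mathrm L^p(0,1;H)$ to some limit $n_\star$.

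Next I would identify $n_\star$. The lower semicontinuity of $\mathcal I$ together with $\mathcal I[n_k]\to0$ forces $\mathcal I[n_\star(t,\cdot)]=0$ for a.e.\ $t$, i.e.\ $\nabla(\log n_\star+c_\star+\phi)=0$, hence $\log n_\star+c_\star+\phi$ is constant in $x$; combined with mass conservation $\int n_\star=M$ and the Poisson equation $c_\star=(-\Delta)^{-1}n_\star$, this is exactly the Poisson-Boltzmann equation~\eqref{st}, whose unique solution is $n_\infty$ by Lemma~\ref{Lem:FuniqueMin}. So $n_\star=n_\infty$, and the limit is independent of the subsequence, which upgrades subsequential convergence to full convergence $n(t,\cdot)\to n_\infty$. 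To go from the weak/local-topology convergence to strong $\mathrm L^p(\R^d)$ convergence for every $p\in[1,\infty)$, I would use the uniform $\mathrm L^\infty$ bound plus tightness at infinity: the decay profile~\eqref{infcon} of $n_\infty$ and the confinement assumption~\eqref{Hyp-C1}, \eqref{Hyp-C4} control the mass of $n(t,\cdot)$ outside large balls uniformly in $t$ (for instance via $\mathcal F[n(t,\cdot)]\le\mathcal F[n_0]$ bounding $\int n\phi$ and the entropy), so no mass escapes, and interpolation between $\mathrm L^1$ (tightness) and $\mathrm L^\infty$ gives convergence in all intermediate $\mathrm L^p$. For $\nabla c$, estimate~\eqref{EstimRho} applied to $\rho=n(t,\cdot)-n_\infty$ together with $\|n(t,\cdot)-n_\infty\|_{\mathrm L^1\cap\mathrm L^{d+1}}\to0$ gives $\|\nabla c(t,\cdot)-\nabla c_\infty\|_{\mathrm L^\infty}\to0$, and combined with the uniform $\mathrm L^q$ bound from the Corollary and the known decay $|\nabla c_\infty(x)|\sim|x|^{-1}$ yields convergence in $\mathrm L^q(\R^d)$ for every $q\in[2,\infty)$.

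The main obstacle I expect is the compactness step: producing a space $V$ in which the time-shifted solutions are bounded and which embeds compactly into a space $H$ where the equation makes distributional sense, uniformly in $t$. The difficulty is genuinely two-fold — controlling behavior near infinity (handled by the confinement, so that $\mathrm H^1$-type bounds come with a confining weight, making the embedding into $\mathrm L^2$ compact rather than merely continuous) and extracting a spatial gradient bound on $n$, since Lemma~\ref{lem2.1} and the Corollary only give $\mathrm L^p$ and $\nabla c$ bounds, not a bound on $\nabla n$. One route is to use the dissipation itself: along the sequence $t_k$, $\int_0^1\mathcal I[n(t_k+s,\cdot)]\,ds\to0$, and since $\mathcal I[n]=\int n|\nabla\log n+\nabla c+\nabla\phi|^2$ controls $\int|\nabla\sqrt n\,|^2$ up to the (bounded) terms $\nabla c$, $\nabla\phi$, one does get an $\mathrm H^1$-type bound on $\sqrt{n_k}$ in $\mathrm L^2(0,1;\mathrm H^1)$, which is the right input for Aubin-Lions. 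Everything else — the Lyapunov identity, the identification via~\eqref{st} and Lemma~\ref{Lem:FuniqueMin}, the upgrade to full convergence, and the passage to $\nabla c$ via~\eqref{EstimRho} — is then routine.
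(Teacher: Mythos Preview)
Your plan is correct and follows essentially the same route as the paper: integrate the dissipation, apply Aubin--Lions to the time-shifted sequence, identify the limit as $n_\infty$ via the uniqueness in Lemma~\ref{Lem:FuniqueMin}, upgrade subsequential to full convergence, and finish by interpolation. The only cosmetic difference is in the endgame: the paper reads off $\mathrm L^1$ convergence of $n$ from the Csisz\'ar--Kullback inequality (after $\mathcal F[n(t,\cdot)]\to\mathcal F[n_\infty]$) and $\mathrm L^2$ convergence of $\nabla c-\nabla c_\infty$ directly from the free-energy decomposition, rather than your tightness-plus-$\mathrm L^\infty$ and~\eqref{EstimRho} route.
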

\begin{proof}
Since $\mathcal F[n(t,.)]$ is nonnegative and decreasing, by~\eqref{lypa} we know that
\begin{equation}
\label{conen}
\lim_{t\to\infty}\int_{t}^{\infty}\mathcal I[n(s,.)]\,ds=0\,.
\end{equation}
This means that the sequence $(n_k,c_k)_{k\in\N}$, defined by $n_k(t,\cdot)=n(t+k,\cdot)$, $c_k=(-\Delta)^{-1}n_k$, is such that $\nabla n_k+n_k\nabla c_k+n_k\nabla\phi$ strongly converges to $0$ in $\mathrm L^2(\R^+\times\R^d)$. By lemma~\ref{Aub}, this shows that $(n_k)_{k\in\N}$ is relatively compact and converges, up to the extraction of a subsequence, to a limit $\overline n$. Up to the extraction of an additional subsequence, $(c_k)_{k\in\N}$ converges to $\overline c=(-\Delta)^{-1}\overline n$ so that we may pass to the limit in the quadratic term and know that
\begin{equation*}
\nabla\,\overline n+\overline n\,\nabla\,\overline c+\overline n\,\nabla\phi=0\,,\quad-\Delta\,\overline c=\overline n\,.
\end{equation*}
Since mass is conserved by passing to the limit, we conclude that $\overline n=n_\infty$ and $\overline c=c_\infty$. The limit is uniquely defined, so it is actually the whole family $(n(t,\cdot))_{t>0}$ which converges as $t\to+\infty$ to $n_\infty$ and $\lim_{t\to+\infty}\mathcal F[n(t,\cdot)]=\mathcal F[n_\infty]$, then proving by the Csisz\'ar-Kullback inequality that $\lim_{t\to \infty}\|n(t,\cdot)-n_\infty\| _{\mathrm L^1(\mathbb{R}^d)}=0$ (see \cite{MR3497125}) and $\lim_{t\to \infty}\|\nabla c(t,\cdot)-\nabla c_\infty\| _{\mathrm L^2(\mathbb{R}^d)} =0$. The result for any $p\in [1,\infty)$ and any $q\in [2,\infty)$ follows by H\"older interpolation.
\end{proof}

\subsection{Uniform convergence in $\mathrm L^{\infty}$ norm in the harmonic potential case}

The issue of the convergence of $n(t,\cdot)$ to $n_\infty$ and of $\nabla c(t,\cdot)$ to $\nabla c_\infty$ in $\mathrm L^\infty(\R^d)$ was left open in Section~\ref{Sec:CVNernstPlanck}. As in the case of the Keller-Segel model, see~\cite{blanchet2010asymptotic}, better results can be achieved in the case of the harmonic potential.
\begin{prop}
\label{propinf}
Set $d=2$, $\phi=\frac{\mu}{2}\,|x|^2$, for some $\mu>0$. Then for any solution $n$ of~\eqref{Eq:(5.2)} is such that
\[
\lim_{t\to+\infty}\|n(t,.)-n_{\infty}\|_{\mathrm L^{\infty}(\mathbb{R}^d)}=0\,.
\]
\end{prop}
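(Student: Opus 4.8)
The plan is to upgrade the $\mathrm L^p$ convergence of Proposition~\ref{Thm:(2.1)} to $\mathrm L^\infty$ convergence by exploiting the special structure of the harmonic potential, following the strategy used for the Keller--Segel model in~\cite{blanchet2010asymptotic}. The starting point is the self-similar change of variables~\eqref{ch2}: in the harmonic case $d=2$, $\phi=\frac\mu2\,|x|^2$, a solution $n$ of~\eqref{Eq:(5.2)} corresponds to a solution $u$ of the rescaled system~\eqref{Eq:(5.1)}, and the stationary solution $n_\infty$ corresponds to the unique stationary solution $u_\infty$ of~\eqref{Eq:(5.1)} (which is smooth, bounded, and decays with the explicit algebraic rate coming from~\eqref{infcon}). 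It therefore suffices to prove $\|u(\tau,\cdot)-u_\infty\|_{\mathrm L^\infty(\R^2)}\to0$ as $\tau\to+\infty$, and then translate back; one must keep track of the $R^{-d}$ prefactor, but since we only claim convergence without rate this is harmless.

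First I would record what is already available: by Lemma~\ref{lem2.1} the family $(u(\tau,\cdot))_{\tau\ge T}$ is bounded in every $\mathrm L^p(\R^2)$, $p\in(1,\infty]$, uniformly in $\tau$; by the Corollary following Lemma~\ref{lem2.1}, $\nabla v(\tau,\cdot)=\nabla((-\Delta)^{-1}u(\tau,\cdot))$ is bounded in every $\mathrm L^q(\R^2)$, $q\in(2,\infty]$; and by Proposition~\ref{Thm:(2.1)}, $u(\tau,\cdot)\to u_\infty$ in every $\mathrm L^p$ with $p<\infty$ and $\nabla v(\tau,\cdot)\to\nabla v_\infty$ in every $\mathrm L^q$ with $q<\infty$. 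The idea is then to run a parabolic regularity / bootstrap argument on the equation $\partial_\tau u=\Delta u+\nabla\cdot(u\,\nabla v)$, written in Duhamel form against the heat semigroup $e^{t\Delta}$ on $\R^2$. Writing $w=u-u_\infty$, one has $\partial_\tau w=\Delta w+\nabla\cdot(w\,\nabla v)+\nabla\cdot(u_\infty\,\nabla(v-v_\infty))$, and the smoothing estimates $\|e^{t\Delta}\nabla\cdot g\|_{\mathrm L^\infty}\lesssim t^{-1/2-1/q}\|g\|_{\mathrm L^q}$ together with the uniform $\mathrm L^p$ bounds let one control $\|w(\tau,\cdot)\|_{\mathrm L^\infty}$ on a unit time interval $[\tau,\tau+1]$ by $\|w(\tau-1,\cdot)\|_{\mathrm L^{p_0}}$ for some finite $p_0$ (e.g. $q$ slightly above $2$, using $\|w\,\nabla v\|_{\mathrm L^q}\le\|w\|_{\mathrm L^{2q}}\|\nabla v\|_{\mathrm L^{2q}}$ and the uniform bounds), plus the source term which tends to $0$ because $\|\nabla(v-v_\infty)(\tau,\cdot)\|_{\mathrm L^q}\to0$ and $u_\infty$ is bounded. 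Since $\|w(\tau-1,\cdot)\|_{\mathrm L^{p_0}}\to0$ by Proposition~\ref{Thm:(2.1)}, one concludes $\|w(\tau,\cdot)\|_{\mathrm L^\infty}\to0$.

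An equivalent and perhaps cleaner route, avoiding any Duhamel bookkeeping, is a compactness argument: combine the uniform $\mathrm L^\infty$ bound with a uniform interior Hölder (Schauder or De~Giorgi--Nash--Moser) estimate for the uniformly parabolic equation satisfied by $u$ on time slabs $[\tau,\tau+1]$ — the drift $\nabla v+\nabla\phi = \nabla v + \mu x$ is, on any fixed ball and for $\tau$ large, bounded in $\mathrm L^\infty$ uniformly — to get that $(u(\tau+\cdot,\cdot))_\tau$ is precompact in $\mathrm C^0_{\rm loc}$; any limit point must, by Proposition~\ref{Thm:(2.1)}, coincide with $u_\infty$ on $\R^2$, and uniform algebraic tail decay of both $u(\tau,\cdot)$ (from Lemma~\ref{lem2.1}-type arguments, or directly from the $\mathrm L^p$ bounds and the equation) and $u_\infty$ (from~\eqref{infcon}) upgrades local uniform convergence to global. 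I expect the main obstacle to be the control near spatial infinity: the local parabolic estimates give nothing uniform there, so one genuinely needs a tail estimate — either a uniform-in-$\tau$ bound of the form $u(\tau,x)\le C\,(1+|x|)^{-\beta}$ for some $\beta>0$ on $[T,\infty)$, obtained by testing the equation against suitable weights $(1+|x|)^{2}$ or $e^{\delta\phi}$ and using the confinement, or an argument showing the escaping mass is uniformly small; handling this tail is where the harmonic (or at least strongly confining) nature of $\phi$ is really used, and it is the step requiring the most care.
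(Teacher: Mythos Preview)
Your Duhamel instinct is right and is exactly what the paper does, but Route~1 as you state it contains two genuine errors that prevent it from closing.

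First, equation~\eqref{Eq:(5.1)} is the \emph{non-confined} Poisson--Nernst--Planck system: it has no nontrivial stationary solution of finite mass (this is the runaway phenomenon discussed in the introduction). Under the change of variables~\eqref{ch2}, the stationary state $n_\infty$ of~\eqref{Eq:(5.2)} corresponds to a \emph{self-similar}, hence time-dependent, solution of~\eqref{Eq:(5.1)}. Second, and more damaging, the $R^{-d}$ prefactor is not harmless for $\mathrm L^\infty$: one has $\|n(\tau,\cdot)-n_\infty\|_{\mathrm L^\infty}=R^2\,\|u(t,\cdot)-u_\infty(t,\cdot)\|_{\mathrm L^\infty}$ with $R\to\infty$, so proving $\|u-u_\infty\|_{\mathrm L^\infty}\to0$ without rate gives nothing for $n$. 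You would need a rate $o(R^{-2})=o(1/t)$ in the unconfined variables, which is strictly stronger than what Proposition~\ref{Thm:(2.1)} provides after rescaling.

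The paper avoids both issues by \emph{not} passing to~\eqref{Eq:(5.1)}. It stays in the confined variables and writes Duhamel against the Fokker--Planck kernel
\[
K(t,x,y)=\frac{\mu}{2\pi(1-e^{-2t})}\,e^{-\frac{\mu|x-e^{-t}y|^2}{2(1-e^{-2t})}}\,,
\]
the Green function of $\partial_t n=\Delta n+\mu\,\nabla\cdot(n\,x)$. This absorbs the unbounded drift $\mu\,x$ into the semigroup, so the only source term left is $\nabla\cdot(n\,\nabla c)$, with $\nabla c$ uniformly bounded. Subtracting the fixed-point identity for $n_\infty$ on $[t,t+1]$ gives
\[
\|n(t+1)-n_\infty\|_{\mathrm L^\infty}\le\|K(1,\cdot,\cdot)\|_{\mathrm L^\infty_x\mathrm L^{r'}_y}\,\|n(t)-n_\infty\|_{\mathrm L^r}+\Big(\int_0^1\|\nabla K(s)\|_{\mathrm L^\infty_x\mathrm L^r_y}\,ds\Big)\,\mathcal R(t)\,,
\]
where $\mathcal R(t)$ collects $\|n\|_{\mathrm L^p}\|\nabla c-\nabla c_\infty\|_{\mathrm L^q}+\|\nabla c_\infty\|_{\mathrm L^q}\|n-n_\infty\|_{\mathrm L^p}$ with $\frac1p+\frac1q+\frac1r=2$. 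An explicit Gaussian computation shows $\|\nabla K(s)\|_{\mathrm L^r_y}\sim s^{-3/2+1/r}$ near $s=0$, integrable on $(0,1)$ precisely for $r\in[1,2)$. Both the linear term and $\mathcal R(t)$ then tend to zero by Proposition~\ref{Thm:(2.1)}, and no tail estimate or compactness argument is needed: the confinement is already encoded in $K$.

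Your Route~2 (local parabolic regularity plus a separate tail bound) could in principle be made to work, but the tail estimate you flag as ``the step requiring the most care'' is exactly what the Fokker--Planck kernel delivers for free. The fix to Route~1 is simply to replace $e^{t\Delta}$ by $K$ and stay in~\eqref{Eq:(5.2)}; this is, up to the Mehler conjugation, equivalent to doing the heat-kernel Duhamel in~\eqref{Eq:(5.1)} on a $\tau$-unit (hence $t$-interval of length $\sim R^2$) window and tracking all the $R$-powers, but the paper's formulation is much cleaner.
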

\begin{proof}
The main tool is the Duhamel formula: see~\cite{MR3196188} for more details. We have
\[
n(t,x)=\irtwoy{K(t,x,y)\,n_0(y)}-\int_0^t{\irtwoy{\nabla K(t-s,x,y)\cdot n(s,y)\,\nabla c(s,y)}\,ds}
\]
where $K(t,x,y)$ is the Green function of the Fokker-Planck equation
\[
\frac{\partial n}{\partial t}=\Delta n+\mu\,\nabla(nx)
\]
which is
\[
K(t,x,y):=\frac{\mu}{2\,\pi\,(1-e^{-2t})}e^{-\frac{\mu|x-e^{-t}y|^2}{2(1-e^{-2t})}}
\]
and from the semi-group property we get that
\begin{multline}
\label{du1}
n(t+1,x)=\irtwoy{K(t,x,y)\,n(t,y)}\\
-\int_t^{t+1}{\irtwoy{\nabla K(t+1-s,x,y)\cdot n(s,y)\,\nabla c(s,y)}\,ds}\,.
\end{multline}
Notice that the stationary solution $n_\infty$ is a fixed-point of the evolution map, that is,
\begin{multline}
\label{du2}
n_{\infty}(x)=\irtwoy{K(t,x,y)\,n_{\infty}(y)}\\-\int_t^{t+1}{\irtwoy{\nabla K(t+1-s,x,y)\cdot n_{\infty}(y)\nabla c_{\infty}(y)}ds}\,.
\end{multline}
Buy doing the difference between~\eqref{du1} and~\eqref{du2}, we have
\begin{multline*}
n(t+1,x)-n_{\infty}(x)\\
=\irtwoy{K(t,x,y)\,\big(n(t,y)-n_{\infty}(y)\big)}\hspace*{5cm}\\
-\int_t^{t+1}{\irtwoy{\nabla K(t+1-s,x,y)\,\big(n(s,y)\,\nabla c(s,y)}- n_{\infty}(y)\,\nabla c_{\infty}(y)\big)\,ds}\,.
\end{multline*}
Hence
\begin{equation*}
\begin{aligned}
\|n(t+1,x)-n_{\infty}(x)\|_{\mathrm L^{\infty}(\mathbb{R}^2)}&\le \|K(t,x,y)\|_{\mathrm L^{\infty}(\mathbb{R}_x^2;\mathrm L^r(\mathbb{R}_y^2))}\, \|n(t,x)-n_{\infty}}\|_{\mathrm L^1(\mathbb{R}^2)\\
&+\int_0^{1}{\|\nabla K(s,x,y)\|_{\mathrm L^{\infty}(\mathbb{R}_x^2;\mathrm L^r(\mathbb{R}_y^2))}\,ds}\hspace*{6pt}\mathcal{R}(t)
\end{aligned}
\end{equation*}
where $\frac{1}{p}+\frac{1}{q}+\frac{1}{r}=2$ with $p\in(2,\infty)$, $q\in[2,\infty)$, $r\in(1,2)$, and
\begin{multline}
\label{rt}
\mathcal{R}(t):=\sup_{s\in(t,t+1)}\Big(\|n(s,\cdot)\|_{\mathrm L^p(\mathbb{R}^2)}\,\|\nabla c(s,\cdot)-\nabla c_{\infty}\|_{\mathrm L^q(\mathbb{R}^2)}\\
+\|\nabla c_{\infty}\|_{\mathrm L^q(\mathbb{R}^2)}\,\|n(s,\cdot)-n_{\infty}\|_{\mathrm L^p(\mathbb{R}^2)}\Big)\,.
\end{multline}
Notice that
\[
\nabla  K=\frac{\mu^2\,(e^{-t}\,y-x)}{2\,\pi\,(1-e^{-2t})}\,e^{-\frac{\mu\,|x-e^{-t}\,y|^2}{2\,(1-e^{-2t})}}
\]
allows us to compute
\[
\|\nabla  K\|_{\mathrm L^r(\mathbb {R}^2_y)}=\frac{\mu^2}{2\,\pi\,(1-e^{-2t})}\left(\irtwo{|x|^r\,e^{-\frac{\mu\,r\,|x|^2}{2\,(e^{2t}-1)}}}\right)^{\frac{1}{r}}=\kappa(r)\,e^{3t}\left(\frac{e^{2t}-1}{\mu}\right)^{-\frac{3}{2}+\frac{1}{r}}
\]
where $\kappa(r)=\left(\int_0^{\infty}{x^re^{-\frac{1}{2}x^2}dx}\right)^{\frac{1}{r}}$. So $\|\nabla  K\|_{\mathrm L^r(\mathbb {R}^2_y)}$ is integrable in $t\in(0,1)$ if and only if $1\le r<2$.
From Proposition~\ref{Thm:(2.1)}, $\mathcal R(t)$ converges to $0$, which completes the proof.
\end{proof}

\section{Coercivity result of quadratic forms}\label{Sec:Coercivity-NP}

In this section, we study the quadratic forms associated with the free energy $\mathcal{F}$
and the Fisher information $\mathcal{I}$ when we Taylor expand these functionals around the stationary solution $(n_{\infty}, c_{\infty})$ defined by~\eqref{st}. Let us consider a smooth perturbation $n=f\,n_\infty$ of $n_{\infty}$ such that $\ird{f\,n_{\infty}}=0$ and suppose that $g\,c_{\infty}:=(-\Delta)^{-1}(f\,n_{\infty})$. We define
\[
Q_1[f]:=\lim_{\varepsilon\to0}\frac{2}{\varepsilon^{2}}\,\mathcal F[n_\infty\,(1+\varepsilon\,f)]=\ird{f^2\,n_{\infty}}+\ird{|\nabla(g\,c_{\infty})|^2}\,,
\]
\[
Q_2[f]:=\lim_{\varepsilon\to0}\frac{2}{\varepsilon^{2}}\,\mathcal I[n_\infty\,(1+\varepsilon\,f)]=\ird{|\nabla(f+g\,c_{\infty})|^2\,n_{\infty}}\,.
\]

\subsection{A spectral gap inequality}

According to~\cite[Section~3.2]{addala}, if the potential $\phi$ satisfies~\eqref{Hyp-C1},~\eqref{Hyp-C2} and~\eqref{Hyp-C3},
then there exists a positive constant $\mathcal C_\star$, such that
\begin{multline}
\label{Poincare}
\ird{|\nabla h|^2\,n_\infty}\ge\mathcal C_\star\ird{h^2\,n_\infty}\\
\forall\,f\in\mathrm H^1(\R^d\,n_\infty\,dx)\quad \mbox{such that}\quad\ird{h\,n_\infty}=0\,.
\end{multline}
Here $n_{\infty }$ is the stationary solution given by~\eqref{st}.
\begin{prop} Let $d=2$ or $d=3$ and assume that the potential $\phi$ satisfies~\hyperlink{C}{$(\mathrm C)$}. Then for any $f\in\mathrm H^1(\R^d, n_\infty\,dx)$ such that $\ird{f\,n_{\infty}}=0$, we have
\[
Q_2[f]\ge\mathcal C_\star\,Q_1[f]\,.
\]
\end{prop}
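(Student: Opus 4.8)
The goal is to show $Q_2[f] \ge \mathcal C_\star\, Q_1[f]$, where
\[
Q_1[f] = \ird{f^2\,n_\infty} + \ird{|\nabla(g\,c_\infty)|^2}, \qquad Q_2[f] = \ird{|\nabla(f + g\,c_\infty)|^2\,n_\infty},
\]
with $-\Delta(g\,c_\infty) = f\,n_\infty$ and $\ird{f\,n_\infty} = 0$. My plan is to expand the square in $Q_2[f]$ and integrate the cross term by parts so that the nonlocal Poisson structure converts into a term matching the nonlocal part of $Q_1$. First I would write
\[
Q_2[f] = \ird{|\nabla f|^2\,n_\infty} + 2\ird{\nabla f\cdot\nabla(g\,c_\infty)\,n_\infty} + \ird{|\nabla(g\,c_\infty)|^2\,n_\infty}.
\]
For the cross term, the key identity is that $\nabla f\,n_\infty$ relates to the flux: from $\log n_\infty + \phi + c_\infty = \lambda$ one has $\nabla n_\infty = -n_\infty\nabla(\phi + c_\infty)$, so $\nabla(f n_\infty) = n_\infty\nabla f - f\,n_\infty\nabla(\phi+c_\infty)$. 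A cleaner route is to integrate by parts directly using $-\Delta(g c_\infty) = f n_\infty$: testing against a suitable function should produce $\ird{|\nabla(g c_\infty)|^2}$ (the nonlocal term in $Q_1$) out of the cross term.

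The cleanest organization, I expect, is the following. Apply the weighted Poincaré inequality~\eqref{Poincare} to the function $h = f + g\,c_\infty$, after checking its $n_\infty$-average vanishes: indeed $\ird{(f + g c_\infty)\,n_\infty} = \ird{f\,n_\infty} + \ird{g c_\infty\,n_\infty}$, and the second integral equals $\ird{g c_\infty\,(-\Delta)(g c_\infty)}$... which is not obviously zero, so instead one must be slightly more careful — likely the correct normalization is built into the definition of $g\,c_\infty = (-\Delta)^{-1}(f n_\infty)$ together with the decay of $n_\infty$ from~\eqref{infcon}, which guarantees $g c_\infty$ decays and the integrations by parts have no boundary terms. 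Granting that~\eqref{Poincare} applies to $h = f + g c_\infty$, it gives
\[
Q_2[f] = \ird{|\nabla h|^2\,n_\infty} \ge \mathcal C_\star \ird{h^2\,n_\infty} = \mathcal C_\star\ird{(f + g c_\infty)^2\,n_\infty}.
\]
Then it remains to show $\ird{(f + g c_\infty)^2\,n_\infty} \ge Q_1[f] = \ird{f^2 n_\infty} + \ird{|\nabla(g c_\infty)|^2}$. Expanding the left side, this reduces to
\[
2\ird{f\,g c_\infty\,n_\infty} + \ird{(g c_\infty)^2\,n_\infty} \ge \ird{|\nabla(g c_\infty)|^2} = \ird{(g c_\infty)\,f\,n_\infty},
\]
i.e. $\ird{f\,g c_\infty\,n_\infty} + \ird{(g c_\infty)^2\,n_\infty} \ge 0$; the first term equals $\ird{(g c_\infty)(-\Delta)(g c_\infty)} = \ird{|\nabla(g c_\infty)|^2} \ge 0$ and the second is manifestly nonnegative, so this holds.

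The main obstacle is justifying the integrations by parts and the use of~\eqref{Poincare}: one must verify that $g\,c_\infty = (-\Delta)^{-1}(f n_\infty) \in \mathrm H^1(\R^d, n_\infty\,dx)$ with the appropriate decay so that no boundary terms appear and that $f + g c_\infty$ is an admissible test function (in particular $n_\infty$-integrable against the weight with zero average). This is where the asymptotics~\eqref{infcon} for $n_\infty$, the regularity of $\phi$ under~$(\mathrm C)$, and the elliptic estimates from Proposition~\ref{Prop:cinfty} enter. I would handle this by first establishing the inequality for smooth compactly supported perturbations $f$ (for which everything is classical) and then passing to the limit by density in $\mathrm H^1(\R^d, n_\infty\,dx)$, using the uniform bounds to control the nonlocal terms. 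The algebraic part above is then routine once the functional-analytic setup is in place.
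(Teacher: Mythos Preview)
Your approach is essentially the same strategy as the paper's --- apply the weighted Poincar\'e inequality~\eqref{Poincare} to $h=f+g\,c_\infty$, then show algebraically that the resulting lower bound dominates $Q_1[f]$ --- but there is a genuine gap in the step where you ``grant'' that~\eqref{Poincare} applies to $h=f+g\,c_\infty$.

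The issue is that $\ird{(g\,c_\infty)\,n_\infty}$ is \emph{not} zero in general, and your attempted justification contains an error: you write that this integral equals $\ird{g\,c_\infty\,(-\Delta)(g\,c_\infty)}$, but $-\Delta(g\,c_\infty)=f\,n_\infty$, not $n_\infty$. There is no reason, from the Poisson equation or from decay considerations, for $g\,c_\infty$ to have zero $n_\infty$-average. So you cannot apply~\eqref{Poincare} directly to $f+g\,c_\infty$.

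The paper handles this by applying~\eqref{Poincare} to the centred function
\[
h(x)=f(x)+g\,c_\infty(x)-\tfrac1M\ird{g\,c_\infty\,n_\infty},
\]
which \emph{does} have zero $n_\infty$-mean. This produces an extra negative term $-\tfrac{\mathcal C_\star}{M}\big(\ird{g\,c_\infty\,n_\infty}\big)^2$ on the right-hand side, and one then checks via Cauchy--Schwarz that
\[
\ird{(g\,c_\infty)^2\,n_\infty}-\tfrac1M\Big(\ird{g\,c_\infty\,n_\infty}\Big)^2\ge0.
\]
With this correction, your remaining algebra (using $\ird{f\,g\,c_\infty\,n_\infty}=\ird{|\nabla(g\,c_\infty)|^2}\ge0$) goes through unchanged and matches the paper's computation. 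The paper also spends a paragraph justifying that $\ird{f\,n_\infty\,g\,c_\infty}=\ird{|\nabla(g\,c_\infty)|^2}$ is well defined, using log-H\"older interpolation and the logarithmic Hardy--Littlewood--Sobolev inequality in $d=2$; your density argument would need to address this integrability as well.
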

\begin{proof}
We apply~\eqref{Poincare} to $h(x)=f(x)+g\,c_{\infty}(x)-\frac{1}{M}\ird{g\,c_{\infty}\,n_{\infty}}$. Notice that $\ird{h(x)\,n_{\infty}}=0$ from $\ird{f\,n_{\infty}}=0$ and $\ird{n_{\infty}(x)}=M$. So we obtain that
\begin{equation*}
\begin{aligned}
Q_2[f]&=\ird{|\nabla(f+g\,c_{\infty})|^2\,n_{\infty}}\\
&\ge\mathcal C_\star\ird{(f+g\,c_{\infty})^2\,n_{\infty}}-\frac{\mathcal C_\star}{M}\left(\ird{g\,c_{\infty}\,n_{\infty}}\right)^2\\
&=\mathcal C_\star\ird{f(f+g\,c_{\infty})\,n_{\infty}}+\mathcal C_\star\ird{g\,c_{\infty}(f+g\,c_{\infty})\,n_{\infty}}\\
&\hspace*{7cm}-\frac{\mathcal C_\star}{M}\left(\ird{g\,c_{\infty}\,n_{\infty}}\right)^2\\
&=\mathcal C_\star Q_1[f]+\mathcal C_\star\ird{f\,n_{\infty}\, g\,c_{\infty}}+\mathcal C_\star\ird{(g\,c_{\infty})^2\,n_{\infty}}\\
&\hspace*{7cm}-\frac{\mathcal C_\star}{M}\left(\ird{g\,c_{\infty}\,n_{\infty}}\right)^2\,.
\end{aligned}
\end{equation*}
Let us study the term $\ird{f\,n_{\infty}\, g\,c_{\infty}}$. Obviously $f\,n_{\infty}$ is in $\mathrm L^2(\mathbb{R}^d)$ because $n_{\infty}$ is bounded. Moreover, for any $p\in(1,2)$, from H\"older's inequality, we infer that
\[
\ird{|f|^p\,n_{\infty}^p}\le\left(\ird{f^2}\right)^\frac{p}{2}\,\left(\ird{n_{\infty}^{\frac{2p}{2-p}}}\right)^{\frac{2-p}{2}}<\infty
\]
because $n_{\infty}\in \mathrm L^1\cap \mathrm L^{\infty}(\mathbb{R}^d)$. When $d=3$, we directly obtain from the Hardy-Littlewood-Sobolev inequality that $\ird{f\,n_{\infty}\, g\,c_{\infty}}$ is well defined and equal to $\ird{|\nabla g\,c_{\infty}|^2}$. When $d=2$, by log-H\"older interpolation, $|f\,n_{\infty}|\,\log|f\,n_{\infty}|$ is integrable. From the logarithmic Hardy-Littlewood-Sobolev inequality (see~\cite{MR1143664}), we also know that $\ird{f\,n_{\infty}\,g\,c_{\infty}}$ is well defined and learn from~\cite{MR2226917} that the function $\nabla(g\,c_{\infty})$ is bounded in $\mathrm L^2(\mathbb{R} ^2)$ using the fact that $\ird{f\,n_{\infty}}=0$. In a word, this means that
\[
\ird{f\,n_{\infty}\,g\,c_{\infty}}=\ird{|\nabla g\,c_{\infty}|^2}
\]
for $d=2$ or $3$. Next, let us notice that
\begin{multline*}
\mathcal C_\star\ird{(g\,c_{\infty})^2\,n_{\infty}}-\frac{\mathcal C_\star}{M}\left(\ird{g\,c_{\infty}\,n_{\infty}}\right)^2\\
=\frac{\mathcal C_\star}{M}\ird{(g\,c_{\infty})^2\,n_{\infty}}\ird{n_{\infty}}-\frac{\mathcal C_\star}{M}\left(\ird{g\,c_{\infty}\,n_{\infty}}\right)^2
\end{multline*}
is nonnegative by H\"older's inequality. Altogether, we conclude that
\[
Q_2[f]\ge \mathcal C_\star\,Q_1[f]+\mathcal C_\star\ird{|\nabla (g\,c_{\infty})|^2} \ge \mathcal C_\star\,Q_1[f]\,.
\]
\end{proof}

\subsection{Optimal spectral gap in a special case.}

As a conclusion, let us give the optimal coercivity constant in the special case that the dimension $d=2$ and the harmonic function $\phi=\frac{\mu}{2}|x|^2, \mu>0$,
\begin{lem}\label{Lem:NP-1.5}
Suppose that $d=2$, $\phi=\frac{\mu}{2}|x|^2$, where $\mu>0$. Then for any $f \in\mathrm H^1(\mathbb{R}^2,n_\infty\,dx)$ such that $\irtwo{f\,n_\infty}=0$,
we have
\[
Q_2[f]\ge \mu\,Q_1[f]\,.
\]
\end{lem}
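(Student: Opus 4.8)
The plan is to reduce the claim to the abstract spectral gap inequality~\eqref{Poincare} with the optimal constant $\mathcal C_\star=\mu$ in the harmonic case, and then to follow the argument of the previous proposition verbatim. Concretely, the first step is to recall that when $\phi=\frac\mu2\,|x|^2$ the stationary measure $n_\infty\,dx$ is, by~\eqref{infcon} and the uniqueness statement of Lemma~\ref{Lem:FuniqueMin}, a radial log-concave profile of the form $n_\infty=Z^{-1}\,e^{-c_\infty-\frac\mu2|x|^2}$ with $c_\infty$ a bounded (in $d=2$, up to the explicit logarithmic behaviour at infinity) radial function solving~\eqref{st}. The key point is that the convexity of the full effective potential $\Phi:=c_\infty+\frac\mu2|x|^2$ is bounded below by $\mu$: since $-\Delta c_\infty=n_\infty\ge0$ one has $\Delta c_\infty\le0$, and more importantly, because $c_\infty$ is the Newtonian potential of a nonnegative radial density, $c_\infty$ is itself a concave-plus-harmonic type function whose Hessian is controlled so that $\mathrm D^2\Phi\ge\mu\,\mathrm{Id}$ in the sense needed; this is exactly the Bakry-Émery criterion, which then yields~\eqref{Poincare} with $\mathcal C_\star=\mu$.

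The second step is to invoke the Bakry-Émery theorem (or, equivalently in this radial one-dimensional-reduction setting, a direct Hardy-type computation): if $d\mu=e^{-\Phi}dx$ with $\mathrm D^2\Phi\ge\mu\,\mathrm{Id}$, then $d\mu$ satisfies a Poincaré inequality with constant $\mu$, i.e. $\ird{|\nabla h|^2\,n_\infty}\ge\mu\ird{h^2\,n_\infty}$ for all $h$ with $\ird{h\,n_\infty}=0$. Here one must be slightly careful in $d=2$ because $\Phi=c_\infty+\frac\mu2|x|^2$ and $c_\infty$ contributes a $+\frac M{2\pi}\log|x|$ term at infinity whose Hessian is $O(|x|^{-2})$ and has a sign that must be checked; since $\log|x|$ is subharmonic away from the origin but $c_\infty$ is the potential of a \emph{positive} charge, the relevant contribution to $\mathrm D^2\Phi$ from $c_\infty$ should in fact help rather than hurt the lower bound, so that $\mathrm D^2\Phi\ge\mu\,\mathrm{Id}$ genuinely holds; I would verify this by writing $\partial_{rr}c_\infty$ and $\frac1r\partial_r c_\infty$ explicitly in terms of the radial mass function $m(r)=\int_{|y|<r}n_\infty\,dy$ and checking both eigenvalues of the Hessian.

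The third and final step is purely algebraic and copies the proof of the preceding proposition line by line: apply the Poincaré inequality with constant $\mu$ to the shifted test function $h=f+g\,c_\infty-\frac1M\ird{g\,c_\infty\,n_\infty}$, expand the square, use the identity $\ird{f\,n_\infty\,g\,c_\infty}=\ird{|\nabla(g\,c_\infty)|^2}$ (valid here by Proposition~\ref{Prop:cinfty} since $\nabla c_\infty$ is bounded in all $\mathrm L^q$, $q>2$, and $f\,n_\infty\in\mathrm L^1\cap\mathrm L^2$ with the required log-integrability), and discard the manifestly nonnegative remainder $\frac{\mathcal C_\star}M\big(\ird{(g\,c_\infty)^2\,n_\infty}\,M-(\ird{g\,c_\infty\,n_\infty})^2\big)+\mathcal C_\star\ird{|\nabla(g\,c_\infty)|^2}$ coming from Cauchy-Schwarz. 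This gives $Q_2[f]\ge\mu\,Q_1[f]$.

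The main obstacle I anticipate is Step~2, namely establishing that $\mathcal C_\star=\mu$ is the \emph{sharp} constant and that the Bakry-Émery bound $\mathrm D^2(c_\infty+\frac\mu2|x|^2)\ge\mu\,\mathrm{Id}$ really holds including the long-range logarithmic correction in dimension two; the algebra in Step~3 is routine once that is in hand, and it is conceivable the author instead proves the special case by an explicit separation of variables / Hermite-type decomposition adapted to the Gaussian part of the weight, which would be an alternative route if the Hessian estimate turns out to be delicate.
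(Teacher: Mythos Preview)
Your Step~2 contains a genuine gap: the claimed Hessian bound $\mathrm D^2\big(c_\infty+\tfrac\mu2|x|^2\big)\ge\mu\,\mathrm{Id}$ is false. For a radial function in $\R^2$ the Hessian has eigenvalues $c_\infty''(r)$ (radial) and $c_\infty'(r)/r$ (tangential). Integrating $-\Delta c_\infty=n_\infty>0$ gives $c_\infty'(r)=-\tfrac1r\int_0^r s\,n_\infty(s)\,ds<0$ for every $r>0$, so the tangential eigenvalue of $\mathrm D^2c_\infty$ is strictly negative everywhere, and the corresponding eigenvalue of $\mathrm D^2\Phi$ equals $\mu+c_\infty'(r)/r<\mu$. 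The repulsive Poisson part does \emph{not} ``help'' the curvature in the angular direction; it degrades it. Bakry--\'Emery therefore delivers only a Poincar\'e constant strictly below $\mu$, and your Step~3 then merely reproduces the weaker abstract bound $Q_2\ge\mathcal C_\star\,Q_1$ of the preceding proposition with some $\mathcal C_\star<\mu$. Note also that you are implicitly conflating two different spectral gaps: $\mathcal C_\star$ is the gap of the \emph{local} generator $h\mapsto-\tfrac1{n_\infty}\nabla\cdot(n_\infty\nabla h)$, whereas the optimal constant in $Q_2\ge C\,Q_1$ is the gap of the \emph{nonlocal} operator~$\mathcal L$; the proposition only shows the latter dominates the former.

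The paper's proof follows exactly the alternative route you mention at the end: it analyses the spectrum of $\mathcal L$ directly. For radial functions it passes to cumulated densities, derives an ODE~\eqref{Eq:(1.7)}, exhibits the explicit radial eigenstate $\phi(s)=s\,\Phi'(s)$ at eigenvalue $2\mu$, and uses a Sturm comparison argument to show $2\mu$ is the lowest radial eigenvalue. A spherical-harmonics decomposition then shows that the $k=1$ angular mode carries the eigenfunction $f=-n_\infty'/n_\infty=\mu\,r+c_\infty'$ with eigenvalue~$\mu$, which is the global gap. The fact that equality $Q_2[f]=\mu\,Q_1[f]$ is attained for this nontrivial $f$ (with $\nabla(g\,c_\infty)\neq0$) confirms, via the strict inequality in the last line of the proposition's proof, that no argument based solely on the Poincar\'e constant of $n_\infty\,dx$ can reach the sharp constant~$\mu$.
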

\begin{proof}
We establish the proof into three steps.

\smallskip\noindent\textbf 
{Step 1.}\quad \emph{Radially symmetric functions and cumulated densities}.
We first consider the case of a spherically symmetric function $f$. The probelm is reduced to solving an ordinary differential equation, for which we use a reformulation in terms of \emph{cumulated densities}. Let
\begin{equation*}
\Phi(s):=\frac{1}{2\,\pi}\int_{B(0,\sqrt{s})} n_{\infty}(x)\,dx\,,\quad\phi(s):=\frac{1}{2\,\pi}\int_{B(0,\sqrt{s})} (f\,n_{\infty})(x)\,dx
\end{equation*}
and
\begin{equation*}
\Psi(s):=\frac{1}{2\,\pi}\int_{B(0,\sqrt{s})} c_{\infty}(x)\,dx\,,\quad\psi(s):=\frac{1}{2\,\pi}\int_{B(0,\sqrt{s})} (g\,c_{\infty})(x)\,dx\,.
\end{equation*}
Notice that $ n_{\infty} $ and $ c_{\infty} $ are both radial, so they can be regarded as functions of $r=|x|$. We can easily infer that
\begin{equation*}
n_{\infty}(\sqrt{s})=2\,\Phi '(s)\,,\quad n_{\infty}'(\sqrt{s})=4\,\sqrt{s}\,\Phi ''(s)
\end{equation*}
and
\begin{equation*}
c_{\infty}(\sqrt{s})=2\,\Psi '(s)\,,\quad c_{\infty}'(\sqrt{s})=4\,\sqrt{s}\,\Psi ''(s)\,.
\end{equation*}
The Poisson equation $-\sqrt{s}c_{\infty}'(\sqrt{s})=\Phi (s)$ can henceforth be rephrased as
\begin{equation}
\label{eql1}
-4\,s\,\Psi ''= \Phi
\end{equation}
while the equation for the density, 
\begin{equation*}
n_{\infty}'(\sqrt{s})+ \mu\sqrt{s}n_{\infty}(\sqrt{s})+ n_{\infty}(\sqrt{s})c_{\infty}'(\sqrt{s})=0\,,
\end{equation*}
is now equivalent to
\begin{equation}
\label{eql2}
\Phi '' + \frac{\mu}{2}\,\Phi ' + 2\,\Phi '\,\Psi '' =0\,.
\end{equation}
After eliminating $ \Psi '' $ from~\eqref{eql1} and~\eqref{eql2}, we can get that $ \Phi $ satisfies the ordinary differential equation
\begin{equation}\label{equation: 3.1}
\Phi '' + \frac{\mu}{2}\,\Phi ' - \frac{1}{2\,s}\,\Phi\, \Phi ' =0
\end{equation}
with initial data $ \Phi(0)=0$ and $\Phi '(0)=a$. The solutions of the ODE are parameterized in terms of $a>0$. 

Let us consider the linearized operator
\[
\mathcal{L}f:=\frac{1}{n_\infty}\,\nabla\cdot\left[f\,n_\infty\,\nabla(g\,c_\infty)\right]\,.
\]
If $f$ solves $-\mathcal{L}f= \lambda\,f$, computations similar to the above ones show that
\begin{equation*}
(n_{\infty}\,f)(\sqrt{s})=2\,\phi '(s)\,,\quad (n_{\infty}\,f')(\sqrt{s})=4\,\sqrt{s}\,\phi ''(s) - 2\,\frac{n_{\infty}'}{n_{\infty}}\,\phi '(s)
\end{equation*}
which is equivalent to
\begin{equation}
\label{sp}
(g\,c_{\infty})(\sqrt{s})=2\,\psi '(s)\,,\quad (g\,c_{\infty})'(\sqrt{s})=4\,\sqrt{s}\,\psi ''(s)\,.
\end{equation}
Using~\eqref{sp}, we find that
\[
-\sqrt{s}\,(g\,c_{\infty})'(\sqrt{s})= \phi (s)\,,\quad
\sqrt{s}\((n_{\infty}\,f')(\sqrt{s})+n_{\infty}(g\,c_{\infty})'(\sqrt{s})\) + \lambda\,\phi(s)=0\,.
\]
After eliminating $\Psi$ and $\psi,$ we get that $\Phi$ and $\phi$ satisfy the equation
\begin{equation}
\label{Eq:(1.7)}
\phi '' + \frac{\mu\,s-\Phi}{2\,s}\phi '+\frac{\lambda -2\,\Phi '}{4\,s}\,\phi= 0\,.
\end{equation}
Next we  check that $\phi=s\,\Phi '(s)$ is a nonnegative solution of~\eqref{Eq:(1.7)} with $ \lambda =2\,\mu $. In fact,~\eqref{Eq:(1.7)} is equivalent to
\begin{equation*}
2\,s\,\phi''+(\mu\,s-\Phi)\,\phi'+(\mu-\Phi')\,\phi=0
\end{equation*}
which is
\begin{equation*}
\(2\,(s\,\phi'-\phi)+(\mu\,s-\Phi)\,\phi\)'=0\,.
\end{equation*}
notice that when $\phi=s\,\phi',$
\begin{equation*}
2(s\,\phi'-\phi)+(\mu\,s-\Phi)\,\phi=s\(2\,s\,\Phi''+(\mu\,s-\Phi)\,\phi'\)=0\,.
\end{equation*}
Hence \emph{$ \lambda =2\,\mu $ is an eigenvalue of the linearized operator $\mathcal{L}f$.}

\smallskip\noindent\textbf
{Step 2.}\quad \emph{Characterization of the radial ground state.} Let us prove that $2\,\mu$ is the lowest positive eigenvalue corresponding to a radial eigenfunction. Assume by contradiction that $\mathcal L$ admits an eigenvalue $\lambda\in(0,2\,\mu)$ with eigenfunction $f_1$ and define the corresponding function $\phi_1$ that satisfy~\eqref{Eq:(1.7)}. 
Let us consider various cases depending on the zeros of $\phi$.

\smallskip\noindent$\bullet$ \emph{Assume that $\phi_1$ is always strictly positive or strictly negative in $(0,\infty)$.} Suppose without losing generality that $\phi_1(s)>0$ in $(0,\infty)$.
On the one hand, if we multiply~\eqref{Eq:(1.7)} written for the eigenvalue $2\,\mu$ and for the eigenvalue $\lambda$ respectively by $\phi_1$ and $\phi$, we obtain that
\begin{equation*}
\label{eqnernst1}
\phi_1\,\phi''-\frac{\Phi''}{\Phi'}\,\phi_1\,\phi'+\frac{2\,\mu-2\,\Phi'}{4\,s}\,\phi_1\,\phi=0\,,
\end{equation*}
\label{eqnernst2}
\begin{equation*}
\phi\,\phi_1''-\frac{\Phi''}{\Phi'}\,\phi\,\phi_1'+\frac{\lambda-2\,\Phi'}{4\,s}\phi\,\phi_1=0\,.
\end{equation*}
By subtracting the second identity from the first one, 
we have
\begin{equation}
\label{eqnernst3}
\frac{\phi_1'\phi(s)-\phi_1\,\phi'(s)}{\Phi'(s)} \bigg |^{\infty} _0=\int_{(0,\infty)}\frac{2\,\mu-\lambda}{4\,s} {\phi\,\phi_1}\,ds >0\,.
\end{equation}
On the other hand, define
\begin{equation*}
h(s):=\frac{1}{2\,\pi}\int_{B(0,\sqrt{s})} f_1^2\, n_{\infty}(r) dr\,.
\end{equation*}
{}From the cumulated mass formulation of Step 1, we find that
\begin{equation*}
h'(s)=\frac{1}{2}f_1^2\,n_{\infty}(\sqrt{s})=\frac{2\,\Phi'(s)^2}{n_{\infty}(\sqrt{s})}
\end{equation*}
is in $\mathrm L^1(0,\infty)$. So, for some constant $\kappa>0$, we have
\begin{equation*}
\begin{aligned}
\phi_1(s)^2=\left(\int_{(s,\infty)} \phi_1'(s)\,ds\right)^2
&\le \left(\int_{(s,\infty)} \frac{\phi_1'(s)^2}{n_\infty(\sqrt{s})}\,ds\right)\, \left(\int_{(s,\infty)} n_{\infty}(\sqrt{s})\,ds\right)\\
&\le \kappa \int_{(s,\infty)} {s^{-\frac{\alpha}{2}}\,{e^{-\frac{s}{2}}}}\,ds \le \kappa e^{-\frac{\mu\,s}{4}}
\end{aligned}
\end{equation*}
when $s$ is large enough. As a consequence, we known that
\begin{equation*}
\lim_{s\to\infty} \phi_1(s)=\lim_{s\to\infty}\phi(s)=\lim_{s\to\infty}s\,\phi(s)=0\,.
\end{equation*}
We also claim that
\begin{equation}
\label{eqnernst4}
\lim_{s\to\infty} s\,\phi_1'(s)=0\,.
\end{equation}
In fact, for any large enough $x_1$, $x_2$, by integrating on $(x_1,x_2)$, we have
\begin{multline*}
\phi_1'(x_2)-\phi'(x_1)+\frac{\mu}{2}\,\big(\phi_1(x_2)-\phi_1(x_1)\big)-\frac{\Phi}{2\,s}\,\big(\phi_1(x_2)-\phi_1(x_1)\big)\\
-\int_{x_1}^{x_2} \phi_1\,\frac{s\,\phi'-\Phi}{2s^2}\,ds +\int_{(x_1.x_2)} \frac{\lambda-\Phi'}{4\,s}\,\phi_1\,ds=0\,.
\end{multline*}
Using again that $\phi_1(s)\le \kappa e^{-\frac{\mu\,s}{4}}$, we get that there exists a constant $c_2$ which is independent of $x_1$ and $x_2$, such that $|\phi_1'(x_2)-\phi_1'(x_1)|\le c_2$. So $\phi_1'(s)$ is bounded. As a result, $\phi_1''(s)$ is also bounded, with a bound $c_3$.
If~\eqref{eqnernst4} is not true, then there exists a constant $c_1$ and a strictly increasing, diverging sequence $(s_k)_{k\in\N}$  such that $s_k\,\phi_1'(s_k)\ge c_1$. For any interval $(s_k,\infty)$, 
we have that
\begin{equation*}
\frac{c_1}{s_k}\le C\,\sqrt{e^{-\frac{\mu\,s_k}{4}}}
\end{equation*}
which is impossible as $k\to\infty$. So from~\eqref{eqnernst4}, we obtain that
\begin{equation}
\label{eqnernst5}
\lim_{s\to\infty}\frac{\phi_1'\phi(s)-\phi_1\,\phi'(s)}{\Phi'(s)}=\lim_{s\to\infty} s\,\phi_1'-\phi_1\,\left(1+\frac{s\,\phi''}{\Phi'}\right)=\lim_{s\to\infty} s\,\phi_1'-\phi_1\,\left(1-\frac{\mu\,s-\Phi}{2}\right)=0\,.
\end{equation}
From~\eqref{eqnernst3},~\eqref{eqnernst5}, we have
\begin{equation*}
0=\frac{\phi_1'\phi(s)-\phi_1\,\phi'(s)}{\Phi'(s)}\bigg |^{\infty}_0=\int_{(0,\infty)}\frac{2\,\mu-\lambda}{4\,s} {\phi\,\phi_1}\,ds>0
\end{equation*}
a contradiction.

\smallskip\noindent$\bullet$ \emph{Assume that $\phi_1$ has a zero in $(0,\infty)$}. By Sturm comparison theorem (see~\cite{ding1991course}), we get that
\begin{equation*}
\phi(s)=s\,{\Phi}'(s)
\end{equation*}
has a zero in $ (0,\infty)$. It means that
\begin{equation*}
n_{\infty}(\sqrt{s})=2\,{\Phi '}(s)
\end{equation*}
has a zero between $ (0,\infty)$. But according to the definition of $ n_{\infty}$, it is impossible.
Hence we have shown that $2\,\mu$ is the best constant. 

\smallskip\noindent\textbf
{Step 3.}\quad \emph{Spherical harmonics decomposition.}

We now deal with the non-radial modes of $\mathcal{L}$. Notice that $ n_{\infty} $
and $ c_{\infty} $ are radial functions: we can use a spherical harmonics decomposition as in~\cite{MR3196188}. In dimension $d=2$, we use radial coordinates and a Fourier decomposition for the angular variables. On the $k^{th}$ mode we can write the operator $\mathcal{L}$ corresponding to the radial functions $ f$ and $g $ as
\begin{equation*}
-f '' -\frac{f'}{r}+\frac{k^2 f}{r^2}+\(\mu\,r+c_{\infty}'\)\(f'+(g\,c_{\infty})'\)-n_{\infty}\,f=\lambda\,f\,,
\end{equation*}
\begin{equation*}
-(g\,c_{\infty})''-\frac{(g\,c_{\infty})'}{r}+\frac{k^2 g\,c_{\infty}}{r^2}=n_{\infty}\,f\,,
\end{equation*}
for any integer $ k \ge 1 $, It is obvious that in non-radial functions, $ k=1$ realizes the infimum of the spectrum of $\mathcal{L}$.
We now check that when $ k=1$, $\lambda =\mu$ and $f=-{n_\infty '}/{n_\infty} $ is an eigenstate.
In fact, we can choose $g\,c_{\infty}=-c_{\infty}'$, so that $f=\mu\,r+c_{\infty}'$, and notice that
\begin{equation*}
-c_{\infty}''-\frac{c_{\infty}'}{r}=n_{\infty}\,,\quad f'+\frac{f}{r}=2\,\mu+c_{\infty}''+\frac{c_{\infty}'}{r}=2\,\mu-n_{\infty}\,,
\end{equation*}
for the first equation, and
\begin{multline*}
-f '' -\frac{f'}{r}+\frac{k^2 f}{r^2}+\(\mu\,r+c_{\infty}'\)\(f'+(g\,c_{\infty})'\)+n_{\infty}\,f\\
=-\left(f'+\frac{f}{r}\right)'-n_{\infty}'+\mu\(\mu\,r+c_{\infty}'\)=\mu\,f
\end{multline*}
for the second equation, while
\begin{equation*}
-(g\,c_{\infty})''-\frac{(g\,c_{\infty})'}{r}+\frac{k^2 g\,c_{\infty}}{r^2}=-\left(c_{\infty}''+\frac{c_{\infty}'}{r}\right)=-n_{\infty}'=n_{\infty}\,f\,.
\end{equation*}
It is easy to prove that $f$ is nonnegative and that $f_1 (r):= r\,f(r)$ solves $ -\mathcal{L}f_1=(\lambda +\mu)\,f_1 $ among the radial functions: we are back to the Step 2and find that $\lambda=\mu$.

Let us summarize: the spectral gap $\lambda$ associated with the operator $\mathcal L$ is achieved either among radial functions and $\lambda=2\,\mu$ in this sense, or it is achieved among the functions in one of the non-radial components (in the sense of harmonics decomposition), which has to be the $k=1$ component, and in that case we have found that $\lambda+\mu=2\,\mu$, that is $\lambda=\mu$. Obviously  $\lambda=\mu$ is optimal, which completes the proof of Lemma~\ref{Lem:NP-1.5}.
\end{proof}

\section{Linearized equation and the large time behaviour}\label{Sec:LargeTime}

This section is primarily devoted to the proof Theorem~\ref{Thm4.1} but also collects some additional results.

\subsection{The scalar product and the linearized operator.}

We adapt the strategy of \cite{MR3196188}. Notice that
\begin{equation}
\label{scalar}
\scalar{f_1}{f_2}:=\ird{f_1\,f_2\,n_\infty}+\ird{n_\infty\,f_1\,\big(G_d*(f_2\,n_\infty)\big)}
\end{equation}
is a scalar product on the admissible set
\[
\mathcal A:=\Big\{f\in\mathrm L^2(\R^d,\,n_\infty\,dx)\,:\, \ird{f\,n_\infty}=0\Big\}
\]
because $Q_1[f]=\scalar{f}{f}$. Now come back to the Poisson-Nernst-Planck system with confinement~\eqref{Eq:(5.2)}. For any $ x\in \mathbb{R}^d$ and $t \ge 0$, let us set
\[
n(t,x)=n_\infty(x)\,\big(1+f(t,x)\big)\,,\quad c(t,x)=c_\infty\,\big(1+g(t,x)\big)
\]
and rewrite the evolution problem in terms of $f$ and $g$ as
\[
n_\infty\frac{\partial f}{\partial t}
= \Delta (n_\infty\,f)+\nabla\cdot{\big(n_\infty\,f\,\nabla\phi\big)}+\nabla\cdot{\big(n_\infty\,\nabla{(c_\infty\,g)}
+n_\infty\,f\,\nabla c_\infty+n_\infty\,f\,\nabla (c_\infty\,g)\big)}\,.
\]
After observing that
\[
\Delta (n_\infty\,f)+\nabla\cdot(n_\infty\,f\,\nabla\phi)+
\nabla\cdot(n_\infty\,f\,\nabla c_\infty)=\nabla\cdot(n_\infty\,\nabla f)\,,
\]
it turns out that
\[
n_\infty\frac{\partial f}{\partial t}=\nabla\cdot(n_\infty\,\nabla f)+ \nabla\cdot\big({n_\infty\,\nabla{(c_\infty\,g)}}\big)+\nabla\cdot\big({n_\infty\,f\,\nabla (c_\infty\,g)}\big)\,.
\]
Hence $(f,g)$ solves
\begin{equation}
\label{Eq:(4.1)}
\left\{
\begin{array}{rcl}\displaystyle
\frac{\partial f}{\partial t}-\mathcal{L}f=\frac{1}{n_\infty}\,\nabla\cdot\left[f\,n_\infty\,\nabla(g\,c_\infty)\right] \\[4pt]
\displaystyle-\Delta(g\,c_\infty)=f\,n_\infty\\
\end{array} \right.
\,x\in \mathbb{R}^d , t>0
\end{equation}
for any $ x\in \mathbb{R}^d$, $t \ge 0$, where the linear operator $\mathcal L$ is defined by
\[
\mathcal Lf:= \frac{1}{n_\infty}\,\nabla \left[n_\infty\,\nabla\big(f+gc_{\infty}\big)\right]\,.
\]
\begin{lem}
The linearized operator $\mathcal L$ is self-adjoint on $\mathcal A$ with the scalar product 
defined in \eqref{scalar}, which means that $\scalar{f_1}{\mathcal Lf_2}=\scalar{\mathcal Lf_1}{f_2}$ for any $f_1,f_2\in\mathcal A$, and moreover,
\[
-\scalar{f}{\mathcal Lf}=Q_2[f]
\]
for any $f\in\mathcal A$.
\end{lem}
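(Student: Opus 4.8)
The plan is to compute the bilinear form $\scalar{f_1}{\mathcal L f_2}$ explicitly and read off its symmetry. Throughout, set $g_i\,c_\infty:=G_d*(f_i\,n_\infty)=(-\Delta)^{-1}(f_i\,n_\infty)$ for $i=1,2$, so that $-\Delta(g_i\,c_\infty)=f_i\,n_\infty$ and $\mathcal L f_i=\frac1{n_\infty}\,\nabla\cdot\big[n_\infty\,\nabla(f_i+g_i\,c_\infty)\big]$. The only structural input about the non-local part of~\eqref{scalar} is the symmetry of the Green kernel, $G_d(x-y)=G_d(y-x)$: by Fubini's theorem, for $h\in\mathrm L^1(\R^d)$ (with $\ird h=0$ when $d=2$),
\[
\ird{n_\infty\,f_1\,(G_d*h)}=\ird{\big(G_d*(f_1\,n_\infty)\big)\,h}=\ird{(g_1\,c_\infty)\,h}\,.
\]

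First I would treat the local term of $\scalar{f_1}{\mathcal L f_2}$: one integration by parts gives
\[
\ird{f_1\,(\mathcal L f_2)\,n_\infty}=\ird{f_1\,\nabla\cdot\big[n_\infty\,\nabla(f_2+g_2\,c_\infty)\big]}=-\ird{n_\infty\,\nabla f_1\cdot\nabla(f_2+g_2\,c_\infty)}\,.
\]
For the non-local term I would apply the symmetry identity above with $h=(\mathcal L f_2)\,n_\infty=\nabla\cdot\big[n_\infty\,\nabla(f_2+g_2\,c_\infty)\big]$, which has zero integral because it is a divergence; this turns $\ird{n_\infty\,f_1\,\big(G_d*((\mathcal L f_2)\,n_\infty)\big)}$ into $\ird{(g_1\,c_\infty)\,\nabla\cdot\big[n_\infty\,\nabla(f_2+g_2\,c_\infty)\big]}$, and one further integration by parts makes it $-\ird{n_\infty\,\nabla(g_1\,c_\infty)\cdot\nabla(f_2+g_2\,c_\infty)}$. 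Adding the two contributions,
\[
\scalar{f_1}{\mathcal L f_2}=-\ird{n_\infty\,\nabla(f_1+g_1\,c_\infty)\cdot\nabla(f_2+g_2\,c_\infty)}\,,
\]
which is manifestly symmetric under $f_1\leftrightarrow f_2$, so $\scalar{f_1}{\mathcal L f_2}=\scalar{\mathcal L f_1}{f_2}$. Taking $f_1=f_2=f$ gives $-\scalar{f}{\mathcal L f}=\ird{n_\infty\,|\nabla(f+g\,c_\infty)|^2}=Q_2[f]$, the second claim; the same computation also yields $\ird{(\mathcal L f)\,n_\infty}=0$, so that $\mathcal L$ does map $\mathcal A$ into itself.

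The algebra is routine, and the real work is to make it rigorous: one must specify a dense domain of $\mathcal L$ in $\mathcal A$ on which the two integrations by parts produce no boundary term at infinity and all integrals converge absolutely. A convenient choice is the set of compactly supported elements of $\mathcal A$, which is dense in $\mathcal A$ (and in the form domain of $Q_1$, $Q_2$). For such $f$, the field $n_\infty\,\nabla f$ is compactly supported; $g\,c_\infty=G_d*(f\,n_\infty)$ is harmonic outside the support of $f$, hence smooth there, and the zero-average condition $\ird{f\,n_\infty}=0$ forces $\nabla(g\,c_\infty)$ to decay like $|x|^{-d}$, so that $n_\infty\,\nabla(g\,c_\infty)$ decays rapidly thanks to the strong decay of $n_\infty$ recorded in~\eqref{infcon} under~\hyperlink{C}{$(\mathrm C)$} (Proposition~\ref{Prop:cinfty} controlling $c_\infty$ and $\nabla c_\infty$). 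Consequently the surface integrals over $\partial B_R$ vanish as $R\to+\infty$. The Fubini step that symmetrizes the kernel is immediate for $d=3$ since $G_3\ge0$, and for $d=2$ it is licensed by the (logarithmic) Hardy--Littlewood--Sobolev estimates already used in Section~\ref{Sec:Coercivity-NP}, the zero-average conditions on $f\,n_\infty$ and on the divergence $h$ providing the decay at infinity. The self-adjoint operator featuring in the spectral analysis of Section~\ref{Sec:LargeTime} should then be understood as the self-adjoint realization of $-\mathcal L$ associated, in the sense of Friedrichs, with the non-negative quadratic form $Q_2$.
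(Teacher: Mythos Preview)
Your proposal is correct and essentially matches the paper's own computation: the paper likewise writes out the two pieces of $\scalar{\mathcal L f_1}{f_2}$, integrates by parts once in each, and arrives at the symmetric expression $-\ird{n_\infty\,\nabla(f_1+g_1\,c_\infty)\cdot\nabla(f_2+g_2\,c_\infty)}$. The only cosmetic difference is that the paper places $\mathcal L$ on the first slot, so the non-local term is immediately $\ird{(g_2\,c_\infty)\,n_\infty\,\mathcal L f_1}$ without the Fubini step you invoke; your additional remarks on the dense domain and the justification of the integrations by parts go beyond what the paper records.
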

\begin{proof}
Set $g_1c_{\infty}=(-\Delta)^{-1}(f_1\,n_{\infty})$, $g_2\,c_{\infty}=(-\Delta)^{-1}(f_2\,n_{\infty})$. By direct computation,
we obtain that
\begin{equation*}
\begin{aligned}
\scalar{\mathcal Lf_1}{f_2}&=\ird{f_2\,\nabla\cdot\big(n_{\infty}\,\nabla(f_1+g_1\,c_{\infty})\big)}+\ird{g_2\,c_{\infty}\,\nabla\cdot\big(n_{\infty}\,\nabla(f_1+g_1c_{\infty})\big)}\\
&=-\ird{n_{\infty}\,\nabla(f_1+g_1\,c_{\infty})\cdot\nabla(f_2+g_2\,c_{\infty})}\,,
\end{aligned}
\end{equation*}
which proves the lemma.
\end{proof}

\subsection{Proof of Theorem~\ref{Thm4.1}}

\begin{proof}
For the equations \eqref{Eq:(4.1)}, we find that
\[
\frac{d}{dt}Q_{1}[f]=-2\,Q_{2}[f]-2\,\lambda(t)\quad\mbox{with}\quad\lambda(t):=\ird{\nabla{(f+gc_\infty)}\cdot f\,n_\infty\,\nabla{(gc_\infty)}}\,.
\]
According to the Cauchy-Schwarz inequality, we have that
\[
\left(\lambda(t)\right)^2
\le Q_{2}[f]\ird{f^2 n_\infty}\;\|\nabla{(gc_\infty)}\|^2 _{\mathrm L^{\infty}(\mathbb{R}^d)}\\
\le Q_{2}[f]\,Q_{1}[f]\,\|\nabla{(gc_\infty)}\|^2 _{\mathrm L^{\infty}(\mathbb{R}^d)}\,.
\]
So we obtain
\[
\frac{d}{dt}Q_{1}[f]\le -2\left(1-\frac{\|\nabla{(gc_\infty)}\| _{\mathrm L^{\infty}(\mathbb{R}^d)}}{\sqrt{\mathcal C_*}}\right)Q_2[f]\le -2\,\mathcal C_*\left(1-\frac{\|\nabla{(gc_\infty)}\| _{\mathrm L^{\infty}(\mathbb{R}^d)}}{\sqrt{\mathcal C_*}}\right)Q_1[f]\,.
\]
We know from Proposition~\ref{Thm:(2.1)} that $\lim_{t\to+\infty}\|\nabla{(gc_\infty)}\| _{\mathrm L^{\infty}(\mathbb{R}^d)}=0$, 
which proves that 
\[
\lim\sup_{t\to\infty}e^{2\,(\mathcal C_*-\varepsilon)}\,Q_1[f(t,\cdot)]<\infty
\]
for any $\varepsilon\in(0,\mathcal C_*)$. It remains to prove that we can also obtain this estimate with $\varepsilon=0$.

Suppose that $\mathcal{C}_*$ is the optimal constant without losing generality. Let us give a more accurate estimate of $\lambda(t)$. If $d=2$, according to~\eqref{EstimRho} applied to $\rho=f\,n_\infty$, we have
\[
\|\nabla{(gc_\infty)}\| _{\mathrm L^{\infty}}\le C(\|f\,n_\infty\| _{\mathrm L^1} +\|f\,n_\infty\| _{\mathrm L^3})
\]
where
\[
\|f\,n_\infty\| _{\mathrm L^1} \le \sqrt{M}\,\|f\sqrt{n_\infty}\| _{\mathrm L^2}\,,\quad\|f\,n_\infty\| _{\mathrm L^3} \le \|f\sqrt{n_\infty}\|^\frac{2}{3} _{\mathrm L^2}\,\|f\,n_\infty\|^\frac{1}{3} _{\mathrm L^{\infty}}\,\|n_\infty\|^\frac{1}{3} _{\mathrm L^{\infty}}\,.
\]
Notice that from $\|f\sqrt{n_\infty}\|^2 _{\mathrm L^2}\le Q_{1}[f]$, we deduce that
\begin{equation}
\label{secondestimate1}
\|\nabla (g\,c_\infty)\| _{\mathrm L^{\infty}} =O\left(Q_1[f(t,\cdot)]\right)^{\frac{1}{3}}
\end{equation}
which leads to
\[
\lambda (t)\le O\(Q_{1}[f(t,\cdot)]^\frac{4}{3}\)\quad\mbox{as}\quad t\to+\infty\,.
\]
As a result, we read from
\[
\frac{d}{dt}Q_1[f]\le-2\,\mathcal C_\star\,Q_1[f]+O\((Q_1[f])^\frac{4}{3}\)
\]
that
\[
\limsup_{t\to\infty}e^{2\,\mathcal C_\star t}\,Q_{1}[f(t,\cdot)]< \infty\,.
\]
When $d=3$, we have the estimate
\[
\|\nabla{(gc_\infty)}\| _{\mathrm L^{\infty}}\le C\(\|f\,n_\infty\| _{\mathrm L^1} +\|f\,n_\infty\| _{\mathrm L^4}\)
\]
and similarly obtain that
\[
\|f\,n_\infty\| _{\mathrm L^1} \le \sqrt{M}\,\|f\sqrt{n_\infty}\| _{\mathrm L^2}\,,\quad\|f\,n_\infty\| _{\mathrm L^4}\le \|f\sqrt{n_\infty}\|^\frac{1}{2} _{\mathrm L^2}\,\|f\,n_\infty\|^\frac{1}{2} _{\mathrm L^{\infty}}\,\|n_\infty\|^\frac{1}{4} _{\mathrm L^{\infty}}\,.
\]
Using again $\|f\sqrt{n_\infty}\|^2 _{\mathrm L^2}\le Q_{1}[f]$, we have
\begin{equation}
\label{secondestimate2}
\|\nabla (g\,c_\infty)\| _{\mathrm L^{\infty}} =O\left(Q_1[f(t,\cdot)]\right)^{\frac{1}{4}}
\end{equation}
which allows us to write that
\[
\lambda (t)\le O\(Q_{1}[f(t,\cdot)]^\frac{5}{4}\)\quad\mbox{as}\quad t\to+\infty\,.
\]
We conclude as above, which completes the proof of Theorem~\ref{Thm4.1}.
\end{proof}

\subsection{Uniform rate of convergence}

Let us give additional results on the convergence in various norms of the solution of~\eqref{Eq:(5.2)} to the stationary solution. 
\begin{cor}
\label{cormore}
 Under the assumptions of Theorem~\ref{Thm4.1}, if $\phi(x)=\frac{1}{2}\,|x|^2$,  the solution 
 $n$ of~\eqref{Eq:(5.2)} is such that
 \begin{equation*}
\|n(t,\cdot)-n_\infty\|_{\mathrm L^p}= O\(e^{-\frac{t}{p}}\)\quad\mbox{and}\quad \|\nabla c(t,\cdot)-\nabla c_\infty\|_{\mathrm L^q}= O\(e^{-\frac{t\,(q+2\,d)}{(d+1)\,q}}\)
\end{equation*}
as $t\to+\infty$, for any $p\in(1,\infty)$ and any $q\in(2,\infty)$. Additionally, if $d=2$, then 
\begin{equation*}
\|n(t,\cdot)-n_\infty\|_{\mathrm L^\infty}= O\(e^{-\lambda t}\)
\end{equation*}
as $t\to+\infty$, for any $\lambda <1$.
\end{cor}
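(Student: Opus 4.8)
The plan is to convert the exponential decay of the quadratic form $Q_1[f(t,\cdot)]$ established in Theorem~\ref{Thm4.1} into decay of the various $\mathrm L^p$ and $\mathrm L^q$ norms, using the interpolation estimates already assembled in Section~\ref{NeSec:mi}. Note first that when $\phi=\frac12\,|x|^2$ we have $\mu=1$, the spectral gap of Lemma~\ref{Lem:NP-1.5} is $\mathcal C_\star=1$ (the optimal constant being $\mu$, realized on the $k=1$ mode), and hence Theorem~\ref{Thm4.1} gives $Q_1[f(t,\cdot)]=O(e^{-2t})$ as $t\to+\infty$. Since $\|f\sqrt{n_\infty}\|_{\mathrm L^2}^2\le Q_1[f]$, this means $\|n(t,\cdot)-n_\infty\|_{\mathrm L^2(n_\infty^{-1}dx)}^2=O(e^{-2t})$, i.e. the weighted $\mathrm L^2$ norm decays like $e^{-t}$.

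The first step is the $\mathrm L^p$ estimate for $p\in(1,\infty)$. I would write
\[
\|n(t,\cdot)-n_\infty\|_{\mathrm L^p}^p=\ird{|f|^p\,n_\infty^p}\le\ird{|f|^p\,n_\infty^{p/2}\,\|n_\infty\|_{\mathrm L^\infty}^{p/2}}
\]
and then split $|f|^p n_\infty^{p/2}=\big(|f|^2 n_\infty\big)^{\theta}\cdot(\text{bounded factor})$; more precisely, for $p\in(1,2)$ one interpolates between $\mathrm L^2(n_\infty\,dx)$ (controlled by $Q_1[f]$) and the uniform bounds $\|n(t,\cdot)\|_{\mathrm L^\infty}\le\mathcal C$ and $\|n_\infty\|_{\mathrm L^\infty}<\infty$ from Lemma~\ref{lem2.1} and~\eqref{infcon}, to get $\|n-n_\infty\|_{\mathrm L^p}=O(Q_1[f]^{1/p})=O(e^{-t/p})$. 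For $p\ge2$ the same interpolation (now between $\mathrm L^2$ and $\mathrm L^\infty$ of $n-n_\infty$) again yields exponent $1/p$ in $Q_1[f]^{1/2}$... wait, one should check the bookkeeping carefully: the stated rate $e^{-t/p}$ corresponds to $\|n-n_\infty\|_{\mathrm L^p}=O\big(Q_1[f]^{1/p}\big)=O\big((e^{-2t})^{1/p}\big)$ only for $p\le2$, whereas for $p>2$ interpolating $\|n-n_\infty\|_{\mathrm L^p}\le\|n-n_\infty\|_{\mathrm L^2}^{2/p}\|n-n_\infty\|_{\mathrm L^\infty}^{1-2/p}$ and using boundedness of the $\mathrm L^\infty$ norm gives $O(e^{-2t/p})$, which is \emph{better} than $e^{-t/p}$, so the stated bound holds uniformly in $p\in(1,\infty)$. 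For the $\nabla c$ estimate I would apply~\eqref{EstimRho} to $\rho=(n-n_\infty)$, so that $\|\nabla c(t,\cdot)-\nabla c_\infty\|_{\mathrm L^\infty}\lesssim\|n-n_\infty\|_{\mathrm L^1}+\|n-n_\infty\|_{\mathrm L^{d+1}}$, which by the previous step is $O(e^{-t})+O(e^{-t/(d+1)})=O(e^{-t/(d+1)})$; combined with $\|\nabla c-\nabla c_\infty\|_{\mathrm L^2}=O(e^{-t})$ (from $\|\nabla(g c_\infty)\|_{\mathrm L^2}^2\le Q_1[f]$) and the interpolation $\|\cdot\|_{\mathrm L^q}\le\|\cdot\|_{\mathrm L^2}^{2/q}\|\cdot\|_{\mathrm L^\infty}^{1-2/q}$, one gets the exponent $\frac2q\cdot1+(1-\frac2q)\cdot\frac1{d+1}=\frac{q+2\,d}{(d+1)\,q}$, matching the statement.

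For the final $\mathrm L^\infty$ assertion in dimension $d=2$, the tool is the Duhamel representation~\eqref{du1}--\eqref{du2} already used in Proposition~\ref{propinf}. Subtracting and taking $\mathrm L^\infty$ norms gives
\[
\|n(t+1,\cdot)-n_\infty\|_{\mathrm L^\infty}\le \|K(1,\cdot,\cdot)\|_{\mathrm L^\infty_x(\mathrm L^r_y)}\,\|n(t,\cdot)-n_\infty\|_{\mathrm L^1}+\Big(\int_0^1\|\nabla K(s,\cdot,\cdot)\|_{\mathrm L^\infty_x(\mathrm L^r_y)}\,ds\Big)\,\mathcal R(t),
\]
where $\mathcal R(t)$ is as in~\eqref{rt} and now, by the $\mathrm L^p$ and $\mathrm L^q$ decay rates just proved (for $p\in(2,\infty)$ close to $2$ and $q\in(2,\infty)$ close to $2$), $\mathcal R(t)=O(e^{-\lambda t})$ for any $\lambda<1$; since $\|n(t,\cdot)-n_\infty\|_{\mathrm L^1}=O(e^{-t})$ as well, both terms on the right are $O(e^{-\lambda t})$, and a standard Grönwall/iteration argument on the sequence $a_k:=\|n(k+t_0,\cdot)-n_\infty\|_{\mathrm L^\infty}$ converts this into $\|n(t,\cdot)-n_\infty\|_{\mathrm L^\infty}=O(e^{-\lambda t})$. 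The main obstacle is purely the exponent bookkeeping: one must choose $p,q$ close enough to $2$ so that the rates $\frac2p$ and $\frac{q+2d}{(d+1)q}$ in $\mathcal R(t)$ both exceed the target $\lambda<1$, and verify that the constant $K(1)$ in the Duhamel contraction is harmless (it is not a true contraction, so the iteration must absorb it via the exponentially small forcing, exactly as in~\cite{blanchet2010asymptotic}); none of this is deep, but it is where care is required.
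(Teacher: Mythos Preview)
Your proposal is essentially correct, and for the $\mathrm L^p$ and $\mathrm L^q$ parts it follows the same interpolation route as the paper: control $\|n-n_\infty\|_{\mathrm L^1}$ and $\|\nabla c-\nabla c_\infty\|_{\mathrm L^2}$ directly by $\sqrt{Q_1[f]}=O(e^{-t})$, control the $\mathrm L^\infty$ endpoints by the uniform bounds of Section~\ref{NeSec:mi} and by~\eqref{secondestimate1}--\eqref{secondestimate2}, and interpolate. (One slip: for $p\in(1,2)$ the bound you actually obtain is $O(Q_1[f]^{1/2})=O(e^{-t})$, not $O(Q_1[f]^{1/p})$; the stated $O(e^{-t/p})$ is still correct but for the weaker reason.)

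The genuine difference is in the $\mathrm L^\infty$ estimate for $d=2$. The paper uses only the corollary's weaker rate $\|n-n_\infty\|_{\mathrm L^p}=O(e^{-t/p})$, which near $p=2$ gives at best $e^{-t/2}$ in $\mathcal R(t)$; this forces an iterative bootstrap (as in~\cite[Remark~5]{MR3196188}): feed the current $\mathrm L^\infty$ rate back into the $\mathrm L^p$ interpolation, improve $\mathcal R(t)$, and repeat, the recursion $a\mapsto\frac{5r-4+(2r+4)a}{7r}$ having fixed point $a=1$. You instead exploit the sharper rate $\|n-n_\infty\|_{\mathrm L^p}=O(e^{-2t/p})$ for $p\ge2$ (coming from $\|n-n_\infty\|_{\mathrm L^2}=O(e^{-t})$ rather than only from $\mathrm L^1$), so that both exponents in $\mathcal R(t)$ approach $1$ as $p,q\to2^+$; the H\"older constraint then forces $r\to1^+$, still admissible for the kernel integral. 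This gives $\mathcal R(t)=O(e^{-\lambda t})$ in one stroke and the Duhamel inequality yields the $\mathrm L^\infty$ bound \emph{directly}, with no recursion: the right-hand side depends only on $\|n(t,\cdot)-n_\infty\|_{\mathrm L^1}$ and $\mathcal R(t)$, both already $O(e^{-\lambda t})$. Your closing remark about a Gr\"onwall/iteration step is therefore unnecessary. Your route is shorter; the paper's bootstrap has the virtue of using only the rates announced in the corollary itself.
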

\begin{proof}
From the Cauchy-Schwarz inequality, we read that
\begin{equation*}
\|n(t,\cdot)-n_\infty\|_{\mathrm L^1(\R^d)}\le \left(\|n_\infty\|_{\mathrm L^1(\R^d)} \ird{\frac{|n(t,\cdot)-n_\infty|^2}{n_\infty}}\right)^\frac{1}{2}
\le \sqrt{C\,M}\,e^{-t}
\end{equation*}
for some $C>0$ if $t$ is taken large enough, and we also know also that
\begin{equation}
\label{interpolate1}
\|n(t,\cdot)-n_\infty\|_{\mathrm L^p(\R^d)}= O\(e^{-\frac{t}{p}}\)
\end{equation}
for any $p\in [1,\infty)$. By definition of $Q_1[f]$, we have
\begin{equation*}
\|\nabla c(t,\cdot)-\nabla c_\infty\|_{\mathrm L^2(\R^d)} \le \sqrt{C}\,e^{-t}
\end{equation*}
for some $C>0$ if $t$ is taken large enough, according to Lemma~\ref{Lem:NP-1.5}. Moreover, according to \eqref{secondestimate1}, \eqref{secondestimate2} and Theorem~\ref{Thm4.1}, we obtain that
\begin{equation}
\label{interpolate2}
\|\nabla c(t,\cdot)-\nabla c_\infty\|_{\mathrm L^\infty(\R^d)} =O\(e^{-\frac{t}{d+1}}\)\,.
\end{equation}
This proves that
\begin{equation*}
\|\nabla c(t,\cdot)-\nabla c_\infty\|_{\mathrm L^q(\R^d)} =O\(e^{-\frac{t(q+2d)}{(d+1)q}}\)
\end{equation*}
for any $q\in [2,\infty)$ by interpolating between \eqref{interpolate1} and \eqref{interpolate2}.\\

The proof of the case $d=2$ is inspired by \cite[Remark~5]{MR3196188}.
We reconsider $\mathcal R(t)$ defined in \eqref{rt} in Section~\ref{NeSec:mi} with $p=\frac{7r}{5r-4}$, $q=\frac{7r}{2r-3}$. We obtain from Corollary~\ref{cormore} that
\begin{equation*}
\|n(t,\cdot)-n_\infty\|_{\mathrm L^\infty(\R^2)}= O\(e^{-\frac{5r-4}{7r}t}\)\,.
\end{equation*}
This is the first step of a proof by induction. If
\begin{equation*}
\|n(t,\cdot)-n_\infty\|_{\mathrm L^\infty(\R^2)}= O\(e^{-at}\)\,,
\end{equation*}
then one has
\begin{equation*}
\|n(t,\cdot)-n_\infty\|_{\mathrm L^\infty(\R^2)}= O\(e^{-\frac{5r-4+(2r+4)a}{7r}t}\)\,.
\end{equation*}
By iterating this estimate infinitely many times, we finally have
\begin{equation*}
\|n(t,\cdot)-n_\infty\|_{\mathrm L^\infty(\R^2)}= O\(e^{-\lambda t}\)
\end{equation*}
for any $\lambda <1$. The proof of the corollary is complete.
\end{proof}

\subsection{Intermediate asymptotics of the Nernst-Planck equation with Poisson term}\label{Sec:inter}
Let us come back to the equation \eqref{Eq:(5.1)}. The self-similar solution of  \eqref{Eq:(5.1)} has the expression
\begin{equation}
\label{intereq01}
u_{\infty}(x,t)=\frac{1}{1+2t}n_{\infty}\left(\frac12\,{\log(1+2t)},\frac{x}{\sqrt{1+2t}}\right)\,,
\end{equation}
\begin{equation}
\label{intereq02}
v_{\infty}(x,t)=c_{\infty}\left(\frac12\,{\log(1+2t)},\frac{x}{\sqrt{1+2t}}\right)\,,
\end{equation}
where $(n_{\infty}, c_{\infty})$ are the stationary solutions of \eqref{Eq:(5.2)} given by~\eqref{st} with the harmonic potential $\phi(x)=\frac{1}{2}|x|^2$. Using Theorem~\ref{Thm4.1} and Corollary~\ref{cormore}, we achieve a result on the \emph{intermediate asymptotics} for the solutions of the Nernst-Planck equation with Poisson term in absence of any external potential of confinement.
\begin{thm} Assume that $u$ solves~\eqref{Eq:(5.1)} with initial datum $u(0,\cdot)= n_{0}\in \mathrm L^2_{+}(n_{\infty}^{-1}dx)$, $\ird{n_{0}}=M$, and $\mathcal F[n_0]<\infty$. Let us consider the self-similar solution defined by \eqref{intereq01} and \eqref{intereq02} of mass $M$. Then, as $t\to+\infty$, we have
\begin{enumerate}
\item[(i)] for any $p\in(1,\infty)$ and any $\lambda <1$,
\begin{multline*}
\|u(t,\cdot)-u_\infty\|_{\mathrm L^1(\R^2)}= O\((1+2t)^{-\frac{1}{2}}\)\,,\\ \|u(t,\cdot)-u_\infty\|_{\mathrm L^p(\R^2)}= O\((1+2t)^{-\frac{\lambda}{2}-\frac{d(p-1)}{2p}}\)\,,
\end{multline*}
\item[(ii)]  for any $q\in(2,\infty)$ and any $\lambda <1$,
\begin{multline*}
\|\nabla v(t,\cdot)-\nabla v_\infty\|_{\mathrm L^2(\R^2)}= O\((1+2t)^{-1+\frac{d}{4}}\)\,,\\ \|\nabla v(t,\cdot)-\nabla v_\infty\|_{\mathrm L^q(\R^2)}= O\((1+2t)^{-\frac{\lambda +1}{2}+\frac{d}{2q}}\)\,.
\end{multline*}
\end{enumerate}
\end{thm}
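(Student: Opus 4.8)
The plan is to deduce the statement from Theorem~\ref{Thm4.1} and Corollary~\ref{cormore} by transporting the decay estimates through the scaling change of variables~\eqref{ch2}. Since $\phi(x)=\frac12\,|x|^2$ corresponds to $\mu=1$ in~\eqref{ch2}, we have $R=R(t)=\sqrt{1+2\,t}$ and $\tau=\tau(t)=\frac12\,\log(1+2\,t)$, so that $e^{-\tau}=R^{-1}=(1+2\,t)^{-1/2}$. By the discussion following~\eqref{ch2}, $(u,v)$ solves~\eqref{Eq:(5.1)} if and only if $(n,c)$ solves~\eqref{Eq:(5.2)} with the harmonic potential $\phi=\frac12\,|x|^2$; since $R(0)=1$ and $\tau(0)=0$, the initial datum of $n$ is again $n_0$, and the hypotheses $n_0\in\mathrm L^2_+(n_\infty^{-1}dx)$, $\ird{n_0}=M$, $\mathcal F[n_0]<\infty$ are precisely those of Theorem~\ref{Thm4.1} and Corollary~\ref{cormore}. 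Finally, $(u_\infty,v_\infty)$ defined by~\eqref{intereq01}--\eqref{intereq02} is the image under~\eqref{ch2} of the stationary solution $(n_\infty,c_\infty)$ of~\eqref{Eq:(5.2)}, hence a solution of~\eqref{Eq:(5.1)} of mass $M$.

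First I would record the elementary scaling identities obtained through the substitution $\xi=x/R$: with $d=2$, for every $p\in[1,\infty)$ and every $q\in[2,\infty)$,
\[
\|u(t,\cdot)-u_\infty(t,\cdot)\|_{\mathrm L^p(\R^2)}=R^{-d\,\frac{p-1}{p}}\,\|n(\tau,\cdot)-n_\infty\|_{\mathrm L^p(\R^2)}
\]
and
\[
\|\nabla v(t,\cdot)-\nabla v_\infty(t,\cdot)\|_{\mathrm L^q(\R^2)}=R^{\frac{d}{q}-1}\,\|\nabla c(\tau,\cdot)-\nabla c_\infty\|_{\mathrm L^q(\R^2)}\,.
\]
Combined with $R=(1+2\,t)^{1/2}$, these reduce the problem to the large-time behaviour, in the self-similar variable, of $\|n(\tau,\cdot)-n_\infty\|_{\mathrm L^p(\R^2)}$ and $\|\nabla c(\tau,\cdot)-\nabla c_\infty\|_{\mathrm L^q(\R^2)}$.

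Then I would insert the decay estimates. Corollary~\ref{cormore} gives $\|n(\tau,\cdot)-n_\infty\|_{\mathrm L^1(\R^2)}=O(e^{-\tau})$ and $\|\nabla c(\tau,\cdot)-\nabla c_\infty\|_{\mathrm L^2(\R^2)}=O(e^{-\tau})$; combined with the scaling identities and $e^{-\tau}=(1+2\,t)^{-1/2}$ these yield the borderline cases of~(i) and~(ii), i.e. the $\mathrm L^1$ bound on $u-u_\infty$ and the $\mathrm L^2$ bound on $\nabla v-\nabla v_\infty$. Corollary~\ref{cormore} also provides $\|n(\tau,\cdot)-n_\infty\|_{\mathrm L^\infty(\R^2)}=O(e^{-\lambda\tau})$ for every $\lambda<1$, so $\mathrm L^p$ interpolation with the $\mathrm L^1$ bound gives $\|n(\tau,\cdot)-n_\infty\|_{\mathrm L^p(\R^2)}=O(e^{-\lambda\tau})$ for every $p\in(1,\infty)$ and $\lambda<1$, hence the general $\mathrm L^p$ bound in~(i). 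For the general $\mathrm L^q$ bound in~(ii) I would first upgrade the $\mathrm L^\infty$ decay of $\nabla c-\nabla c_\infty$: applying~\eqref{EstimRho} with $\rho=(n-n_\infty)(\tau,\cdot)$ gives $\|\nabla c(\tau,\cdot)-\nabla c_\infty\|_{\mathrm L^\infty(\R^2)}\le\|n(\tau,\cdot)-n_\infty\|_{\mathrm L^1(\R^2)}+C\,\|n(\tau,\cdot)-n_\infty\|_{\mathrm L^3(\R^2)}=O(e^{-\lambda\tau})$ for every $\lambda<1$, and then interpolating with the $\mathrm L^2$ bound yields $\|\nabla c(\tau,\cdot)-\nabla c_\infty\|_{\mathrm L^q(\R^2)}=O(e^{-\lambda\tau})$ for every $q\in(2,\infty)$ and $\lambda<1$; substituting in the second scaling identity gives the remaining bound in~(ii).

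The computation is essentially bookkeeping, and I do not expect a genuine obstacle. The one step that is not entirely routine is the upgrade of the $\mathrm L^\infty$ decay of $\nabla c-\nabla c_\infty$ to a rate $e^{-\lambda\tau}$ with $\lambda<1$, because the rate $e^{-\tau(q+2\,d)/((d+1)\,q)}$ recorded in Corollary~\ref{cormore} is not sharp enough for the general $\mathrm L^q$ statement in~(ii); everything else amounts to tracking exponents so that the scaling factors $R^{-d(p-1)/p}$, $R^{d/q-1}$ and the exponential $e^{-\lambda\tau}=R^{-\lambda}$ combine into the stated negative powers of $(1+2\,t)$.
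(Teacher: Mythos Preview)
Your proposal is correct and follows exactly the route the paper indicates: the theorem is stated without proof, preceded only by the sentence ``Using Theorem~\ref{Thm4.1} and Corollary~\ref{cormore}, we achieve a result on the \emph{intermediate asymptotics}\dots''. Your derivation via the scaling identities from~\eqref{ch2} and the decay rates of Corollary~\ref{cormore} is precisely what is intended.

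One remark is worth making. You correctly observe that the $\mathrm L^q$ rate $e^{-\tau(q+2d)/((d+1)q)}$ for $\nabla c-\nabla c_\infty$ recorded in Corollary~\ref{cormore} is too weak to yield the exponent $-\frac{\lambda+1}{2}+\frac{d}{2q}$ in part~(ii), and you fix this by reapplying~\eqref{EstimRho} to $\rho=n(\tau,\cdot)-n_\infty$ together with the $\mathrm L^\infty$ bound on $n-n_\infty$ from Corollary~\ref{cormore}, obtaining $\|\nabla c(\tau,\cdot)-\nabla c_\infty\|_{\mathrm L^\infty(\R^2)}=O(e^{-\lambda\tau})$ for every $\lambda<1$. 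This upgrade is not written out in the paper, so your account is in fact more complete than the paper's on this point.
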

\bigskip{\bf Acknowledgments} This work has been supported by the Project EFI ANR-17-CE40-0030 of the French National Research Agency. \\
\noindent{\scriptsize\copyright\,2019 by the author. This paper may be reproduced, in its entirety, for non-commercial purposes.}

\end{document}